\theoremstyle{definition}
\newtheorem{definition}{Definition}[section]
\theoremstyle{plain}
\newtheorem{lemma}[definition]{Lemma}
\newtheorem{proposition}[definition]{Proposition}
\newtheorem{theorem}[definition]{Theorem}
\numberwithin{equation}{section}
\begin{document}
\title[Variational Inequalities]{Variational Inequalities on Geodesic Spaces}
\author[E. Hacioglu]{Emirhan Hacioglu}
\address[E. HACIO\u{G}LU]{Department of Mathematics, Yildiz Technical
University, Davutpasa Campus, Esenler, 34220 Istanbul, Turkey}
\email{\texttt{emirhanhacioglu@hotmail.com}}
\author[V. Karakaya]{Vatan Karakaya}
\address[V. Karakaya]{Department of Mathematical Engineering, Yildiz
Technical University, Davutpasa Campus, Esenler, 34210 Istanbul,Turkey}
\email{\texttt{vkkaya@yahoo.com}}
\keywords{Variational Inequality,Iterative methods;nonexpansive mapping,
Convergence analysis,Multivalued mappings, CAT($0$) spaces, $\Delta -$%
convergence}
\subjclass[2010]{47J20, 47J25, 47H10.}

\begin{abstract}
In this paper, we introduce a new variational inequality problem(VIP)
associated with nonself multivalued nonexpansive mappings in $CAT(0)$
spaces.
\end{abstract}

\maketitle

\section{Introduction and Preliminaries}

\qquad\ Let $(X,d)$ be a metric space then the family of nonempty, closed
and convex subsets of $X$, the family of nonempty compact and convex subsets
of $X,$ the family of nonempty compact subsets of $X,$ the family of
nonempty closed and bounded convex subsets of $X$ will be denoted by $%
C(X),KC(X),$ $K(X),$ $CB(X)$, respectively. Let $H$ be a Haussdorf Metric on 
$CB(X),$ defined by

\begin{equation*}
H(A,B)=\max \{\sup_{x\in A}d(x,B),\sup_{x\in B}d(x,A)\}
\end{equation*}%
\ where $d(x,B)=\inf \{d(x,y);y\in B\}.$ A multivalued mappings $%
T:X\rightarrow 2^{X}$ is called nonexpansive if for all $x,y\in X$ \ \ 
\begin{equation*}
H(Tx,Ty)\leq d(x,y)
\end{equation*}%
is satisfied. A point is called fixed point of $T$ \ if $x\in Tx$ and the
set of all fixed points of $T$ is denoted by $F(T).$Many iterative processes
to find a fixed point of multivalued mappings have been introduced in metric
spaces and Banach spaces. One of them is defined by Nadler\cite{nadler} as
generalization of Picard as follows; 
\begin{equation*}
x_{n+1}\in Tx_{n}.
\end{equation*}%
A multivalued version of Mann and Ishikawa fixed point procedures goes as
follow; 
\begin{equation*}
x_{n+1}\in (1-\alpha _{n})x_{n}+\alpha _{n}Tx_{n}
\end{equation*}%
and%
\begin{eqnarray*}
x_{n+1} &\in &(1-\alpha _{n})x_{n}+\alpha _{n}Ty_{n}, \\
y_{n} &\in &(1-\beta _{n})x_{n}+\beta _{n}Tx_{n}
\end{eqnarray*}%
where $\{\alpha _{n}\}$ and $\{\beta _{n}\}$ are sequences\ in$\ [0,1]$.

Gursoy and Karakaya \cite{gurs} (see also \cite{gurs2}) introduced Picard-S
iteration as follows;%
\begin{eqnarray*}
x_{n+1} &=&Ty_{n}, \\
y_{n} &=&(1-\alpha _{n})Tx_{n}+\alpha _{n}Tz_{n}, \\
z_{n} &=&(1-\beta _{n})x_{n}+\beta _{n}Tx_{n}
\end{eqnarray*}%
where $\{\alpha _{n}\}$ and $\{\beta _{n}\}$ are$\ $sequences\ in$\ [0,1]$.
They have showed that it converges to fixed point of contraction mappings
faster than Ishikawa, Noor, SP, CR, S and some other iterations. Also they
use it to solve differential equations. Now, we define multivalued version
of Picad-S iteration in $CAT(0)$ spaces as follows; Let $K\subset CAT(0)$ be
a nonempty, closed and convex subset, $T:K\rightarrow C(K)$ is a mapping, $%
x_{0}\in K$. then for any $n\geq 0$, the proximal multivalued Picard-S
iteration is defined by 
\begin{eqnarray}
x_{n+1} &=&P_{K}(u_{n}),  \label{1.1} \\
y_{n} &=&P_{K}((1-\alpha _{n})w_{n}\oplus \alpha _{n}v_{n}),  \notag \\
z_{n} &=&P_{K}((1-\beta _{n})x_{n}\oplus \beta _{n}w_{n})  \notag
\end{eqnarray}%
where $P_{K}$ is a metric projection, $\{\alpha _{n}\}$ and $\{\beta _{n}\}$
are sequences in$\ [0,1]$ with $\liminf_{n}(1-\beta _{n})\beta _{n}>0$, $%
u_{n}\in Ty_{n},$ $v_{n}\in Tz_{n}$ and $w_{n}\in Tx_{n}$.

\qquad Before the results we give some definitions and lemmas about $CAT(0)$
and \ $\Delta -$convergences.

\qquad Let $(X,d)$ be a metric space, $x,y\in X$ and $C\subseteq X$ nonempty
subset. A geodesic path (or shortly a geodesic) joining $x$ and $y$ is a map 
$c:[0,t]\subseteq 
\mathbb{R}
\rightarrow X$ \ such that $c(0)=x$, $c(t)=y$ and $d(c(r),c(s))=|r-s|$ for
all $r,s\in \lbrack 0,t]$. In particular $c$ is an isometry and $%
d(c(0),c(t))=t.$ The image of $c,$ $c([0,t])$ is called geodesic segment
from $x$ to $y$ and it is unique (it not necessarily be unique) then it is
denoted by $[x,y].$ $z\in \lbrack x,y]$ if and only if for an $\lambda \in
\lbrack 0,1]$ such that $d(z,x)=(1-\lambda )d(x,y)$ and $d(z,y)=\lambda
d(x,y).$ The point $z$ is denoted by $z=(1-\lambda )x\oplus \lambda y.$ If
for every $x,y\in X$ there is a geodesic path then $(X,d)$ called geodesic
space and uniquely geodesic space if that geodesic path is unique for any
pair $x,y.$ A subset $C\subseteq X$ is called convex if it contains all
geodesic segment joining any pair of points in it.

\qquad In geodesic metric space $(X,d)$, a geodesic triangle $\Delta (x,y,z)$
\ consist of three point $x,y,z$ as vertices and three geodesic segments of
any pair of these points, that is, $q\in $ $\Delta (x,y,z)$ means that $q\in
\lbrack x,y]\cup \lbrack x,z]\cup \lbrack y,z].$ The triangle $\overline{%
\Delta }(\overline{x},\overline{y},\overline{z})$ in $(%
\mathbb{R}
^{2},d_{2})$ is called comparison triangle for the triangle $\Delta (x,y,z)$
such that $d(x,y)=d_{2}(\overline{x},\overline{y}),d(x,z)=d_{2}(\overline{x},%
\overline{z})$ and $d(y,z)=d_{2}(\overline{y},\overline{z})$ \ A point point 
$\overline{z}\in \lbrack \overline{x},\overline{y}]$ called comparison point
for $z\in \lbrack x,y]$ if $d(x,z)=d_{2}(\overline{x},\overline{z}).$ A
geodesic triangle $\Delta (x,y,z)$ in $X$ is satisfied $CAT(0)$ inequality if%
$\ d(p,q)\leq d_{2}(\overline{p},\overline{q})$ for all $p,q\in $ $\Delta
(x,y,z)$ where $\overline{p},\overline{q}\in \overline{\Delta }(\overline{x},%
\overline{y},\overline{z})$ are the comparison points of $p,q$ respectively.
A geodesic space is called $CAT(0)$ space if for all geodesic triangles
satisfies $CAT(0)$ inequality or alternatively: A geodesic space is called $%
CAT(0)$ space if and only if the inequality 
\begin{equation*}
d^{2}(x,(1-\lambda )y\oplus \lambda z)\leq (1-\lambda )d^{2}(x,y)+\lambda
d^{2}(x,z)-\frac{R}{2}\lambda (1-\lambda )d^{2}(y,z),
\end{equation*}%
satisfied for every $x,y,z\in X$,\ $\lambda \in \lbrack 0,1]$.

\begin{proposition}
\cite{dogh2}Let $(X,d)$ be a $CAT(0)$ space Then, for any $x,y,z\in X$ and $%
\lambda \in \lbrack 0,1]$, we have%
\begin{equation*}
d((1-\lambda )x\oplus \lambda y,z)\leq (1-\lambda )d(x,z)+\lambda d(y,z).
\end{equation*}
\end{proposition}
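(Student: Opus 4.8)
The plan is to exploit the definition of a $CAT(0)$ space through comparison triangles, reducing the claimed convexity estimate to the elementary convexity of the Euclidean distance in the plane. Write $m=(1-\lambda)x\oplus\lambda y$, so that $m\in[x,y]$ with $d(x,m)=\lambda d(x,y)$ by the defining property of points on a geodesic segment. The boundary cases $\lambda\in\{0,1\}$ are immediate, since then $m$ equals $x$ or $y$, so I may assume $\lambda\in(0,1)$.

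First I would form the geodesic triangle $\Delta(x,y,z)$ and choose a comparison triangle $\overline{\Delta}(\bar x,\bar y,\bar z)$ in $(\mathbb{R}^{2},d_{2})$, so that $d_{2}(\bar x,\bar y)=d(x,y)$, $d_{2}(\bar x,\bar z)=d(x,z)$ and $d_{2}(\bar y,\bar z)=d(y,z)$. Let $\bar m\in[\bar x,\bar y]$ be the comparison point of $m$, i.e.\ the point with $d_{2}(\bar x,\bar m)=d(x,m)=\lambda d(x,y)$. Because $[\bar x,\bar y]$ is a Euclidean segment parametrized at unit speed, this forces $\bar m=(1-\lambda)\bar x+\lambda\bar y$; verifying this identification is the only place where a small computation is needed, and it is where I expect the only real care to be required.

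Next I would apply the $CAT(0)$ inequality to the pair $m,z\in\Delta(x,y,z)$ with comparison points $\bar m,\bar z$, which yields $d(m,z)\le d_{2}(\bar m,\bar z)$. The right-hand side is now a purely Euclidean quantity, and convexity of the norm gives
\[
d_{2}(\bar m,\bar z)=\bigl\|(1-\lambda)(\bar x-\bar z)+\lambda(\bar y-\bar z)\bigr\|\le(1-\lambda)d_{2}(\bar x,\bar z)+\lambda d_{2}(\bar y,\bar z).
\]
Substituting the comparison-triangle equalities $d_{2}(\bar x,\bar z)=d(x,z)$ and $d_{2}(\bar y,\bar z)=d(y,z)$ then delivers the desired estimate
\[
d(m,z)\le(1-\lambda)d(x,z)+\lambda d(y,z).
\]

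I expect no serious obstacle: the entire argument is driven by the comparison-triangle definition, and the single nontrivial step is confirming that the comparison point of $m$ divides $[\bar x,\bar y]$ in the same ratio $\lambda$. As an alternative route that avoids comparison triangles altogether, one could start from the quadratic $CAT(0)$ inequality stated above, square the proposed right-hand side, and reduce the claim to $(d(x,z)-d(y,z))^{2}\le d^{2}(x,y)$, which is exactly the triangle inequality; this works provided the constant $R/2$ in the displayed inequality is at least $1$, and I would include it as a self-contained check.
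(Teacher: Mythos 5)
The paper does not actually prove this proposition: it is quoted verbatim from the cited reference \cite{dogh2}, so there is no in-paper argument to compare yours against, and your proposal must be judged on its own merits. On those merits it is correct and complete. The comparison-triangle argument is the standard one: the only step needing care, as you say, is identifying the comparison point of $m=(1-\lambda)x\oplus\lambda y$, and since $\bar m\in[\bar x,\bar y]$ satisfies $d_{2}(\bar x,\bar m)=\lambda d_{2}(\bar x,\bar y)$ on a Euclidean segment, the identification $\bar m=(1-\lambda)\bar x+\lambda\bar y$ is forced; the $CAT(0)$ comparison inequality applied to $m$ and the vertex $z$, followed by convexity of the Euclidean norm, finishes the proof. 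Your alternative route also checks out: subtracting $\bigl((1-\lambda)d(x,z)+\lambda d(y,z)\bigr)^{2}$ from the quadratic upper bound reduces the claim to $\lambda(1-\lambda)\bigl(d(x,z)-d(y,z)\bigr)^{2}\leq \frac{R}{2}\lambda(1-\lambda)d^{2}(x,y)$, and since a $CAT(0)$ space satisfies the displayed inequality with $R=2$, the triangle inequality $|d(x,z)-d(y,z)|\leq d(x,y)$ suffices; your caveat about $R/2\geq 1$ is exactly the right one. One small remark: you take the convention $d(x,m)=\lambda d(x,y)$, whereas the paper's preliminaries literally state $d(z,x)=(1-\lambda)d(x,y)$ for $z=(1-\lambda)x\oplus\lambda y$; the paper's version is a typo (under it, taking $\lambda=0$ would make the proposition assert $d(y,z)\leq d(x,z)$, which is false), so your reading is the one under which the statement is true, and your proof is consistent throughout.
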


Let $\{x_{n}\}$ be a bounded sequence on $X$ and $x\in X$. $\ $Then, with
setting%
\begin{equation*}
r(x,\{x_{n}\})=\limsup_{n\rightarrow \infty }d(x,x_{n})
\end{equation*}%
the asymptotic radius of $\{x_{n}\}$ is defined by%
\begin{equation*}
r(\{x_{n}\})=\inf \{r(x,\{x_{n}\});x\in X.\},
\end{equation*}%
the asymptotic radius of $\{x_{n}\}$ with respect to $K\subseteq X$ is
defined by%
\begin{equation*}
r_{K}(\{x_{n}\})=\inf \{r(x,\{x_{n}\});x\in K.\}
\end{equation*}%
and the asymptotic center of $\{x_{n}\}$ is defined by%
\begin{equation*}
A(\{x_{n}\})=\{x\in X:r(x,\{x_{n}\})=r(\{x_{n}\})\}.
\end{equation*}%
and let $\omega _{w}(x_{n}):=\cup A(\{x_{n}\})$ where union is taken on all
subsequences of $\{x_{n}\}.$

\begin{definition}
\label{(def2.8)}\cite{esp}A sequence $\{x_{n}\}\subset X$ \ is said to be $%
\Delta -$ convergent to $x\in X$ if \ $x$ is the unique asymptotic center of
\ all subsequence $\{u_{n}\}$ of $\{x_{n}\}$, i.e.$\omega _{w}(x_{n}):=\cup
A(\{x_{n}\})$ $=\{x\}$ . In this case we write $\Delta -\lim_{n}x_{n}=x$.
\end{definition}

\begin{lemma}
\label{(lemma2.10)}\cite{dogh2}
\end{lemma}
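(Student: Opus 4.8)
Since the statement is quoted from \cite{dogh2}, I read it as the standard fact that every bounded sequence in a complete $CAT(0)$ space possesses a unique asymptotic center, together with the companion rigidity statement: if $A(\{x_n\})=\{x\}$, if $\{u_n\}$ is a subsequence of $\{x_n\}$ with $A(\{u_n\})=\{u\}$, and if $\{d(x_n,u)\}$ converges, then $x=u$. The plan is to handle uniqueness first, since the rigidity part will be deduced from it, and to lean throughout on the $CAT(0)$ (CN) inequality displayed above together with the elementary observation that $r(\cdot,\{x_n\})$ attains its infimum exactly on the asymptotic center.

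For uniqueness I would use the classical convexity-of-the-squared-distance argument. Suppose $x,y\in A(\{x_n\})$, so that $r(x,\{x_n\})=r(y,\{x_n\})=r(\{x_n\})=:r$. Let $m=\tfrac12 x\oplus\tfrac12 y$ be the midpoint and apply the $CAT(0)$ inequality with $\lambda=\tfrac12$ at each index $n$:
\begin{equation*}
d^{2}(m,x_{n})\le \tfrac12 d^{2}(x,x_{n})+\tfrac12 d^{2}(y,x_{n})-\tfrac{R}{8}d^{2}(x,y).
\end{equation*}
Taking $\limsup_n$ and using that $\limsup$ of a sum of nonnegative sequences is bounded by the sum of the $\limsup$'s gives $r^{2}\le r(m,\{x_n\})^{2}\le r^{2}-\tfrac{R}{8}d^{2}(x,y)$, where the first inequality is simply the definition of $r$ as an infimum. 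Since the curvature coefficient is positive, this forces $d(x,y)=0$, i.e. $x=y$, so the asymptotic center is a singleton.

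For the rigidity statement I would reduce it to the uniqueness already established, but applied to the subsequence $\{u_n\}$. The hypothesis that $\{d(x_n,u)\}$ converges means every subsequence has the same limit, so $r(u,\{u_n\})=\lim_n d(u_n,u)=\lim_n d(x_n,u)=r(u,\{x_n\})$. Since $u=A(\{u_n\})$ and $x=A(\{x_n\})$ realize the respective infima, I can then chain
\begin{equation*}
r(x,\{u_n\})\le r(x,\{x_n\})=r(\{x_n\})\le r(u,\{x_n\})=r(u,\{u_n\})=r(\{u_n\}),
\end{equation*}
where the first inequality holds because a $\limsup$ over a subsequence cannot exceed the $\limsup$ over the full sequence, and the middle inequality holds because $x$ minimizes $r(\cdot,\{x_n\})$. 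As $r(\{u_n\})$ is itself an infimum, the outer terms must coincide, whence $r(x,\{u_n\})=r(\{u_n\})$ and therefore $x\in A(\{u_n\})=\{u\}$, giving $x=u$.

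The genuinely nontrivial point, which I expect to be the main obstacle, is the \emph{existence} of the asymptotic center; uniqueness is cheap once the CN inequality is available. Existence is where completeness of $X$ enters: one picks a minimizing sequence $\{y_k\}$ for $r(\cdot,\{x_n\})$ and shows via the same midpoint estimate that $\tfrac{R}{8}d^{2}(y_k,y_l)\le \tfrac12 r(y_k,\{x_n\})^2+\tfrac12 r(y_l,\{x_n\})^2-r^2\to 0$, so that $\{y_k\}$ is Cauchy; its limit is the sought center by the $1$-Lipschitz continuity of $r(\cdot,\{x_n\})$. The one place where a casual argument can slip is the $\limsup$ bookkeeping, since $\limsup$ is only subadditive and never additive, so I would record the minimizing-sequence estimate explicitly rather than merely gesture at it.
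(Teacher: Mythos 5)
You have proved the wrong statement. In the paper, Lemma \ref{(lemma2.10)} has an empty body only as a typesetting artifact: its actual content is the enumerated list immediately following it, namely (i) every bounded sequence in a complete $CAT(0)$ space has a $\Delta$-convergent \emph{subsequence}, and (ii) if $K$ is a closed convex subset of a complete $CAT(0)$ space and $\{x_{n}\}\subset K$ is bounded, then the asymptotic center of $\{x_{n}\}$ lies in $K$. The rigidity statement occupying your second paragraph is not part of this lemma at all; it is the paper's Lemma \ref{(lemma2.11)}, stated separately. What you do establish --- existence and uniqueness of asymptotic centers via the (CN)-inequality and the midpoint/minimizing-sequence argument, plus the rigidity lemma --- is correct and cleanly argued, but neither (i) nor (ii) follows from it without further work, so relative to the statement actually being proved there is a genuine gap.

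Concretely: for (i), uniqueness of the asymptotic center of each individual subsequence is not enough, because distinct subsequences may have distinct asymptotic centers; $\Delta$-convergence of a subsequence demands one point that is the asymptotic center of \emph{all} of its further subsequences. The standard route behind the cited result of Dhompongsa--Panyanak first extracts, by a diagonal argument, a \emph{regular} subsequence --- one whose every further subsequence has the same asymptotic radius --- and only then invokes your existence/uniqueness: if $\{u_{n}\}$ is regular with center $x$ and $\{v_{n}\}$ is any subsequence with center $v$, then $r(x,\{v_{n}\})\leq r(x,\{u_{n}\})=r(\{u_{n}\})=r(\{v_{n}\})$ forces $x=v$. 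That diagonal extraction is the missing combinatorial ingredient. For (ii), the missing step is the projection argument: if $x$ is the asymptotic center computed in $X$, then $d(P_{K}(x),x_{n})\leq d(x,x_{n})$ for all $n$ (each $x_{n}$ lies in $K$ and $P_{K}$ is nonexpansive with $P_{K}(x_{n})=x_{n}$), hence $r(P_{K}(x),\{x_{n}\})\leq r(\{x_{n}\})$, so $P_{K}(x)$ is also an asymptotic center and your uniqueness gives $x=P_{K}(x)\in K$. Note finally that the paper itself offers no proof of this lemma --- it is quoted from \cite{dogh2} --- so any proof is supplementary material; but as written, yours proves adjacent facts, not the lemma the paper uses.
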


\begin{enumerate}
\item[i)] \textit{Every bounded sequence in a }complete $CAT(0)$ space%
\textit{\ has a }$\Delta $\textit{-convergent subsequence}

\item[ii)] \textit{If }$K$\textit{\ is a closed convex subset of a }complete 
$CAT(0)$\textit{\ and if }$\{x_{n}\}$\textit{\ is a bounded sequence in }$K$%
\textit{, then the\ asymptotic center of }$\{x_{n}\}$\textit{\ is in }$K$
\end{enumerate}

\begin{lemma}
\label{(lemma2.11)}\cite{dogh2} If $\{x_{n}\}$ is a bounded sequence in $X$
with $A(\{x_{n}\})=\{x\}$ and $\{u_{n}\}$ is a subsequence of $\{x_{n}\}$
with $A(\{u_{n}\})=u$ and the sequence $\{d(x_{n},u)\}$ converges, then $x=u$
\end{lemma}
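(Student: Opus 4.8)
The plan is to argue by contradiction: assume $x\neq u$ and exhibit a point whose asymptotic radius with respect to $\{x_{n}\}$ is strictly smaller than $r(\{x_{n}\})$, which is impossible since $r(\{x_{n}\})$ is by definition the infimum of $r(\cdot,\{x_{n}\})$ over $X$. The natural candidate is the midpoint $m=\tfrac{1}{2}x\oplus\tfrac{1}{2}u$ (which exists because a $CAT(0)$ space is uniquely geodesic), and the engine of the estimate is the $CAT(0)$ convexity inequality for the squared distance displayed above, specialized to $\lambda=\tfrac{1}{2}$ with $y=x$, $z=u$ and evaluation point $x_{n}$. For $R=2$ this reads $d^{2}(x_{n},m)\le\tfrac{1}{2}d^{2}(x_{n},x)+\tfrac{1}{2}d^{2}(x_{n},u)-\tfrac{1}{4}d^{2}(x,u)$; only the fact that the last coefficient is strictly positive will matter. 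Throughout I write $r:=r(\{x_{n}\})$ and $\rho:=r(\{u_{n}\})$.

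First I would translate the two center hypotheses into statements about radii. Since $x$ is the asymptotic center of $\{x_{n}\}$ we have $\limsup_{n}d(x_{n},x)=r$, hence by continuity of $t\mapsto t^{2}$ also $\limsup_{n}d^{2}(x_{n},x)=r^{2}$. Because $\{u_{n}\}$ is a subsequence of $\{x_{n}\}$, for every $y\in X$ we have $\limsup_{n}d(y,u_{n})\le\limsup_{n}d(y,x_{n})$; taking the infimum over $y$ gives $\rho\le r$. The hypothesis that $\{d(x_{n},u)\}$ converges is used exactly to pin down $r(u,\{x_{n}\})$: the limit $L:=\lim_{n}d(x_{n},u)$ equals $\lim_{n}d(u_{n},u)=\limsup_{n}d(u_{n},u)=r(u,\{u_{n}\})=\rho$, the last equality because $u$ is the asymptotic center of $\{u_{n}\}$. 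In particular $d^{2}(x_{n},u)\to\rho^{2}$.

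Next I would run the main estimate. Applying the midpoint inequality for every $n$ and passing to the $\limsup$, and using that $d^{2}(x_{n},u)$ converges so that it may be extracted from the $\limsup$ as a genuine limit, I obtain $\limsup_{n}d^{2}(x_{n},m)\le\tfrac{1}{2}r^{2}+\tfrac{1}{2}\rho^{2}-\tfrac{1}{4}d^{2}(x,u)$. If $x\neq u$ then $d^{2}(x,u)>0$, and together with $\rho^{2}\le r^{2}$ this yields $\tfrac{1}{2}r^{2}+\tfrac{1}{2}\rho^{2}-\tfrac{1}{4}d^{2}(x,u)<\tfrac{1}{2}r^{2}+\tfrac{1}{2}r^{2}=r^{2}$, so that $r(m,\{x_{n}\})<r=r(\{x_{n}\})$. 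This contradicts $r(\{x_{n}\})=\inf\{r(y,\{x_{n}\}):y\in X\}$, and therefore $x=u$.

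The only genuinely delicate point is the interchange of $\limsup$ with the right-hand side of the convexity inequality: the term $\tfrac{1}{2}d^{2}(x_{n},x)$ must contribute its $\limsup$, the term $\tfrac{1}{2}d^{2}(x_{n},u)$ its limit, and they combine additively only because the latter converges. This is precisely where the hypothesis on $\{d(x_{n},u)\}$ is indispensable, since without convergence one is left with $r(u,\{x_{n}\})\ge\rho$ rather than equality, and the strict inequality above no longer closes. Everything else — boundedness guaranteeing $r<\infty$, existence of the midpoint, and the passage between $d$ and $d^{2}$ via monotonicity of $t\mapsto t^{2}$ — is routine, and I note that only $x\in A(\{x_{n}\})$ (not full uniqueness of the center) is needed to reach the contradiction.
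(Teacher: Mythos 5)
Your proof is correct. One thing to note at the outset: the paper itself does not prove this lemma --- it is quoted without proof from \cite{dogh2} --- so the only meaningful comparison is with the argument in that source. There the proof is a chain of inequalities driven by \emph{uniqueness} of asymptotic centers: assuming $x\neq u$,
\begin{equation*}
\limsup_{n}d(u_{n},u)<\limsup_{n}d(u_{n},x)\leq \limsup_{n}d(x_{n},x)<\limsup_{n}d(x_{n},u)=\limsup_{n}d(u_{n},u),
\end{equation*}
where the first strict inequality holds because $x\notin A(\{u_{n}\})=\{u\}$, the second because $u\notin A(\{x_{n}\})=\{x\}$, the middle inequality is the subsequence property, and the final equality is exactly where the convergence of $\{d(x_{n},u)\}$ enters; the chain is self-contradictory, so $x=u$. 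You replace this chain by a quantitative midpoint estimate: the CAT(0) (CN) inequality produces the explicit competitor $m=\tfrac{1}{2}x\oplus \tfrac{1}{2}u$ with $r(m,\{x_{n}\})^{2}\leq \tfrac{1}{2}r^{2}+\tfrac{1}{2}\rho ^{2}-\tfrac{1}{4}d^{2}(x,u)<r^{2}$, contradicting the infimum definition of $r(\{x_{n}\})$. All of your delicate steps check out: pulling the convergent term $d^{2}(x_{n},u)$ out of the $\limsup$, the inequality $\rho \leq r$ for subsequences, and the identification $\lim_{n}d(x_{n},u)=r(u,\{u_{n}\})=\rho$. What your route buys: it never invokes uniqueness of asymptotic centers --- only the memberships $x\in A(\{x_{n}\})$ and $u\in A(\{u_{n}\})$ --- so it does not presuppose the (nontrivial) fact that asymptotic centers of bounded sequences in complete CAT(0) spaces are singletons, which the two strict inequalities of the chain argument require. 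What it costs: it is tied to the CN inequality, hence to CAT(0) geometry, whereas the chain argument works verbatim in any metric space in which the relevant asymptotic centers happen to be unique (e.g.\ complete uniformly convex settings).
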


\begin{theorem}
\cite{shim}\label{the1}Let $X$ be a bounded, complete and uniformly convex
metric space. If $T$ is a multivalued nonexpansive mapping which assigns to
each point of $X$ a nonempty compact subset of $X$, then $T$ has a fixed
point in $X$.
\end{theorem}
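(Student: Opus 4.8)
The plan is to run the classical asymptotic–center argument for multivalued nonexpansive maps, transplanted from the $CAT(0)$ setting of the lemmas above to the uniformly convex metric setting. The first step is to manufacture an \emph{approximate fixed point sequence}. Fixing a base point $u\in X$ and a sequence $\lambda_n\in(0,1)$ with $\lambda_n\to1$, I would define the auxiliary maps $T_n x=(1-\lambda_n)u\oplus\lambda_n Tx$, where the convex combination is taken pointwise over $Tx$ through the convex structure of $X$. Since $Tx$ is compact, each $T_n x$ is compact, hence lies in $CB(X)$, and convexity of the metric yields $H(T_n x,T_n y)\le\lambda_n H(Tx,Ty)\le\lambda_n d(x,y)$, so $T_n$ is a multivalued $\lambda_n$-contraction. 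As $X$ is complete, Nadler's contraction principle \cite{nadler} provides a fixed point $x_n\in T_n x_n$; writing $x_n=(1-\lambda_n)u\oplus\lambda_n w_n$ with $w_n\in Tx_n$ gives $d(x_n,Tx_n)\le d(x_n,w_n)=(1-\lambda_n)d(u,w_n)\le(1-\lambda_n)\,\mathrm{diam}(X)\to0$. Thus $\{x_n\}$ satisfies $\lim_n d(x_n,Tx_n)=0$.

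Next I would bring in the asymptotic center apparatus. Because $X$ is bounded, complete and uniformly convex, every bounded sequence has a unique asymptotic center and admits a $\Delta$-convergent subsequence (the uniformly convex counterparts of Lemma \ref{(lemma2.10)} and Definition \ref{(def2.8)}). Passing to such a subsequence and relabelling, I may assume $\Delta\text{-}\lim_n x_n=z$; crucially this means not only $A(\{x_n\})=\{z\}$, but also $A(\{u_n\})=\{z\}$ for \emph{every} subsequence $\{u_n\}$ of $\{x_n\}$.

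The core step would be to show $z\in Tz$. For each $n$ choose, by compactness of the values, $w_n\in Tx_n$ with $d(x_n,w_n)=d(x_n,Tx_n)$ and then $y_n\in Tz$ with $d(w_n,y_n)=d(w_n,Tz)$. Nonexpansiveness gives $d(w_n,y_n)\le H(Tx_n,Tz)\le d(x_n,z)$, hence $d(x_n,y_n)\le d(x_n,Tx_n)+d(x_n,z)$ and $\limsup_n d(x_n,y_n)\le\limsup_n d(x_n,z)=r(\{x_n\})$. Since $Tz$ is compact, a subsequence $y_{n_j}\to y\in Tz$; along the corresponding sub-subsequence both $d(x_{n_j},Tx_{n_j})\to0$ and $d(y_{n_j},y)\to0$, so $\limsup_j d(x_{n_j},y)\le\limsup_j d(x_{n_j},z)=r(z,\{x_{n_j}\})$. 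By $\Delta$-convergence, $z$ is also the asymptotic center of $\{x_{n_j}\}$, so $r(z,\{x_{n_j}\})=r(\{x_{n_j}\})$; therefore $r(y,\{x_{n_j}\})\le r(\{x_{n_j}\})$, forcing $y$ to be an asymptotic center of $\{x_{n_j}\}$. Uniqueness of the asymptotic center then gives $y=z$, and since $y\in Tz$ we conclude $z\in Tz$, i.e. $z\in F(T)$.

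The hard part will be exactly this third step: the passage from the set-distance estimate $\limsup_n d(x_n,Tz)\le r(\{x_n\})$ to the identification of a genuine point of $Tz$ as the asymptotic center. The nearest points $y_n\in Tz$ move with $n$, so no single target can be fixed in advance; the remedy is the interplay of compactness of $Tz$ (to extract a convergent $y_{n_j}\to y$) with the fact that $\Delta$-convergence pins the asymptotic center of every subsequence to the same $z$, together with uniqueness of asymptotic centers in uniformly convex spaces. The remaining bookkeeping is to confirm that the convex-combination and Nadler machinery, as well as the existence-and-uniqueness of asymptotic centers and of $\Delta$-convergent subsequences, transfer from the $CAT(0)$ statements quoted above to a general bounded, complete, uniformly convex metric space.
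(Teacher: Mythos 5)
A point of order first: the paper never proves this statement. It is imported verbatim from Shimizu and Takahashi \cite{shim} and used as a black box in the proof of Theorem \ref{teo2}, so there is no in-paper proof to compare yours against; your proposal must stand on its own. Its core, your third step, does stand: picking $w_n\in Tx_n$ with $d(x_n,w_n)=d(x_n,Tx_n)$, then nearest points $y_n\in Tz$, extracting $y_{n_j}\to y\in Tz$ by compactness of $Tz$, and forcing $y=z$ from uniqueness of asymptotic centers together with the fact that $\Delta$-convergence pins the asymptotic center of every subsequence to $z$, is correct, and it is essentially the same demiclosedness argument the paper itself runs in the CAT(0) setting to prove Theorem \ref{(teo4.3)}.

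The genuine gap is in your first step, and it is not the ``bookkeeping'' you defer at the end. The contraction estimate $H(T_nx,T_ny)\le\lambda_n H(Tx,Ty)$ requires the Busemann-type property $d((1-\lambda)u\oplus\lambda x,(1-\lambda)u\oplus\lambda y)\le\lambda d(x,y)$. In the stated generality --- a uniformly convex metric space in the sense of Takahashi, which is the setting of \cite{shim} --- the only available axiom is $d(z,(1-\lambda)x\oplus\lambda y)\le(1-\lambda)d(z,x)+\lambda d(z,y)$, and chaining it twice gives only $d((1-\lambda)u\oplus\lambda x,(1-\lambda)u\oplus\lambda y)\le\lambda^{2}d(x,y)+\lambda(1-\lambda)\left[d(u,x)+d(u,y)\right]$, which is useless. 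Moreover the property can genuinely fail in uniformly convex spaces: a closed spherical cap of small radius in $S^{2}$ is a bounded, complete, uniformly convex geodesic space (cf.\ \cite{ohta}), yet positive curvature makes midpoints spread apart, $d\bigl(\tfrac{1}{2}z\oplus\tfrac{1}{2}x,\tfrac{1}{2}z\oplus\tfrac{1}{2}y\bigr)>\tfrac{1}{2}d(x,y)$ for suitable points, so your $T_n$ need not be a contraction of any modulus and Nadler \cite{nadler} yields nothing. Consequently your argument proves the theorem only for Busemann-convex uniformly convex spaces (e.g.\ CAT(0) or hyperbolic ones); to be fair, that weaker statement covers the only use the paper makes of Theorem \ref{the1}, namely its application inside a complete CAT(0) space in Theorem \ref{teo2}, but it does not prove the quoted result, and the proof in \cite{shim} necessarily avoids this route. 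A secondary, repairable point: your second step cannot simply quote Lemma \ref{(lemma2.10)}, which is stated for CAT(0) spaces; existence and uniqueness of asymptotic centers follow from uniform convexity via a Cantor-intersection argument on the sublevel sets of $x\mapsto\limsup_{n}d(x,x_n)$, and $\Delta$-convergent subsequences are obtained by first passing to a regular subsequence, both of which should be said explicitly.
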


In a complete $CAT(0)$ space, the metric projection $P_{K}(x)$ of $x$ onto a
nonempty, closed and convex subset $K$ is singleton and nonexpansive.

The concept of inner-product has been generalized from Hilbert space to a $%
CAT(0)$ space $X$ by Berg and Nikolaev \cite{berg}. as follows: For any $%
a,b\in X,$ with denoting $\overrightarrow{ab}$ as a vector in $X$,
quasi-linearization mapping defined as%
\begin{eqnarray*}
\langle ,\rangle &:&(X\times X)\times (X\times X)\rightarrow 
\mathbb{R}
, \\
\langle \overrightarrow{ab},\overrightarrow{cd}\rangle &=&\frac{1}{2}%
[d^{2}(a,d)+d^{2}(b,c)-d^{2}(a,c)-d^{2}(b,d)]
\end{eqnarray*}%
for all $a,b,c,d\in X$ and satisfies following properties%
\begin{eqnarray*}
\langle \overrightarrow{ab},\overrightarrow{ab}\rangle &=&d^{2}(a,b) \\
\langle \overrightarrow{ab},\overrightarrow{cd}\rangle &=&-\langle 
\overrightarrow{ba},\overrightarrow{cd}\rangle \\
\langle \overrightarrow{ab},\overrightarrow{ab}\rangle &=&\langle 
\overrightarrow{ae},\overrightarrow{cd}\rangle +\langle \overrightarrow{eb},%
\overrightarrow{cd}\rangle \\
\langle \overrightarrow{ab},\overrightarrow{cd}\rangle &=&d(a,b)d(c,d)
\end{eqnarray*}%
for all $a,b,c,d,e\in X$ The last properties is known as a Cauchy-Schwarz
inequality and it is a characterization of $CAT(0)$ space: A geodesic metric
space is a $CAT(0)$ if and only if it satisfies Cauchy-Schwarz inequality.\ 

\begin{lemma}
\cite{berg}\label{lem1}Let $X$ be a $CAT(0)$ and $K$ be a nonempty and
convex subset of $X$, $x\in X$ and $u\in K$. Then $u=P_{K}(x)$ if and only
if 
\begin{equation*}
\langle \overrightarrow{xu},\overrightarrow{yu}\rangle \leq \text{ for all }%
y\in K
\end{equation*}
\end{lemma}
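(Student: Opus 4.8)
The plan is to first strip the quasi-linearization notation away and turn the claimed criterion into an elementary inequality between distances, and only afterwards split into the two implications. Expanding the defining formula for $\langle\,,\rangle$ with $a=x$, $b=d=u$, $c=y$ and using $d(u,u)=0$ gives
\begin{equation*}
\langle \overrightarrow{xu},\overrightarrow{yu}\rangle =\tfrac{1}{2}\left[ d^{2}(x,u)+d^{2}(u,y)-d^{2}(x,y)\right],
\end{equation*}
so the condition $\langle \overrightarrow{xu},\overrightarrow{yu}\rangle \leq 0$ is exactly
\begin{equation*}
d^{2}(x,u)+d^{2}(u,y)\leq d^{2}(x,y)\qquad\text{for all }y\in K .
\end{equation*}
I would carry out the whole argument in terms of this reformulated inequality, which also makes clear that the missing right-hand side in the statement is the constant $0$.

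The backward implication is the easy one and I would dispose of it first. Assuming the displayed inequality holds for every $y\in K$, nonnegativity of $d^{2}(u,y)$ immediately yields $d(x,u)\leq d(x,y)$ for all $y\in K$; hence $u$ realizes the distance from $x$ to $K$, which is the definition of $u=P_{K}(x)$. No geometry beyond positivity of the metric is needed here, and the existence/uniqueness of the projection required implicitly is supplied by the remark preceding the lemma.

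The forward implication is where the $CAT(0)$ structure is used, and I expect it to be the main obstacle. Assuming $u=P_{K}(x)$, I would fix an arbitrary $y\in K$ and exploit convexity of $K$: the geodesic point $\gamma(t)=(1-t)u\oplus ty$ lies in $K$ for each $t\in[0,1]$, so minimality of $u$ gives $d(x,u)\leq d(x,\gamma(t))$. Applying the $CAT(0)$ inequality with apex $x$ to the pair $u,y$ produces
\begin{equation*}
d^{2}(x,\gamma(t))\leq (1-t)d^{2}(x,u)+t\,d^{2}(x,y)-t(1-t)d^{2}(u,y).
\end{equation*}
Substituting the minimality estimate, cancelling the term $(1-t)d^{2}(x,u)$, dividing by $t>0$, and letting $t\to 0^{+}$ would deliver $d^{2}(x,u)+d^{2}(u,y)\leq d^{2}(x,y)$, i.e.\ the required inequality.

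The two genuinely delicate points in the forward step are applying the comparison inequality with the correct apex (so that the cross term appears with coefficient $t(1-t)$ times $d^{2}(u,y)$, matching the coefficient that comes out of the quasi-linearization) and justifying the passage to the limit after dividing by $t$, since the inequality only becomes sharp in the limit $t\to 0$. Everything else is routine bookkeeping, and the equivalence of the reformulated inequality with $\langle \overrightarrow{xu},\overrightarrow{yu}\rangle\leq 0$ closes the argument.
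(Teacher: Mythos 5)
Your proposal is correct, but there is nothing in the paper to compare it against: Lemma \ref{lem1} is quoted from Berg and Nikolaev \cite{berg} and the paper gives no proof of it, so your argument stands on its own. What you give is the standard proof, and every step checks out. The quasilinearization expansion with $a=x$, $b=d=u$, $c=y$ is right (the term $d^{2}(u,u)$ vanishes), and you correctly identify that the truncated right-hand side in the paper's statement should read $0$. The backward direction is indeed immediate from $d^{2}(u,y)\geq 0$, with the single caveat that this shows $u$ is \emph{a} nearest point of $K$ to $x$; writing $u=P_{K}(x)$ uses the single-valuedness of the projection, which the paper's remark preceding the lemma supplies (for closed convex $K$ in a complete CAT(0) space --- the lemma's hypotheses as stated are slightly weaker, but that is a defect of the statement, not of your proof). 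The forward direction is also sound: convexity of $K$ puts $\gamma(t)=(1-t)u\oplus ty$ in $K$, minimality gives $d^{2}(x,u)\leq d^{2}(x,\gamma(t))$, and the (CN)-type inequality with reference point $x$ gives
\begin{equation*}
d^{2}(x,u)\leq (1-t)d^{2}(x,u)+t\,d^{2}(x,y)-t(1-t)d^{2}(u,y),
\end{equation*}
whence, after cancelling and dividing by $t>0$, one gets $d^{2}(x,u)\leq d^{2}(x,y)-(1-t)d^{2}(u,y)$ for every $t\in(0,1]$; since the right-hand side is continuous in $t$, letting $t\to 0^{+}$ is legitimate and yields exactly the reformulated inequality. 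So the two ``delicate points'' you flag are both handled: the cross term has the correct coefficient $t(1-t)$, and the limit passage is justified by continuity of the bound in $t$.
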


Let $X$ be a real Hilbert space and $K\subset X$ \ be nonempty closed and
convex. A operator $A:K\rightarrow 2^{X}$ is called monotone if and only if 
\begin{equation*}
\langle x-y,x^{\ast }-y^{\ast }\rangle \geq 0
\end{equation*}%
for all $x,y\in X,$ $x^{\ast }\in Ax,y^{\ast }\in Ay$. If $A$ is a monotone
operator then the variational inequality associated with $A$ is finding $%
(u,x)_{u\in Ax}$ such that 
\begin{equation*}
\langle u,y-x\rangle \geq 0\text{, for all }y\in K\text{ }
\end{equation*}%
The VIPs associated with monotone operators have applications in applied
mathematics. For interested readers can find more informations about VIPs
and their applications in the book by Kinderlehrer and Stampacchia (see 
\cite{kind1, kind2}
).

Now let $X$ be a complete $CAT(0)$ space, $K\subset X$ be nonempty, closed
and convex and $T:K\rightarrow X$ be a nonexpansive mapping. In 2015,
Khatibzadeh, \& Ranjbar \cite{kha} defined the variational inequality
associated with the nonexpansive mapping $T$ \ as follows

\begin{equation*}
\text{Find }x\in K\text{ such that }\langle \overrightarrow{Txx},%
\overrightarrow{xy}\rangle \geq \text{ for all }y\in K
\end{equation*}%
They prove some existence and convergence results for this problem.

In this paper, we define variational inequality associated with the a
non-self multivalued nonexpansive mapping $T:K\rightarrow KC(X)$ as follows%
\begin{equation}
\text{Find }(u,x)_{u\in Tx}\text{ such that }\langle \overrightarrow{ux},%
\overrightarrow{xy}\rangle \geq 0\text{ for all }y\in K  \label{1.2}
\end{equation}%
and we prove some existence and convergence theorems for this problem.

\section{Existence of A Solution}

In this section, it is assumed that $X$ is a complete $CAT(0)$ and $K$ is a
nonempty, closed and convex subset of $X$.

\begin{definition}
If $K$ is also bounded subset of $X$ and $T:K\rightarrow C(X)$. Then the
projection $P_{K}T$ of multivalued mapping $T$ onto $K$ is defined by%
\begin{eqnarray*}
P_{K}^{\ast }T(x) &=&\bigcup\limits_{x^{\prime }\in Tx}\{P_{K}(x^{\prime })\}
\\
&=&\{P_{K}(x^{\prime }):x^{\prime }\in Tx\} \\
&=&\{v\in K:d(x^{\prime },v)=D(x^{\prime },K),\text{ }x^{\prime }\in Tx\}
\end{eqnarray*}%
where $P_{K}$ is metric projection and $D(x^{\prime },K)=\inf_{v^{\prime
}\in K}d(x^{\prime },v^{\prime }).$
\end{definition}

\begin{lemma}
$P_{K}^{\ast }T(x)$ is multivalued nonexpansive mapping from $K$ to $2^{K}$
\end{lemma}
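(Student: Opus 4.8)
The plan is to show that the projected multivalued map $P_K^\ast T$ is well-defined with values in the nonempty closed (indeed compact) convex subsets of $K$, and then that it satisfies the nonexpansive inequality $H(P_K^\ast T(x), P_K^\ast T(y)) \le d(x,y)$ for all $x,y \in K$. First I would record that for each $x \in K$ and each $x' \in Tx$ the metric projection $P_K(x')$ is a single point of $K$, since in a complete $CAT(0)$ space the projection onto the nonempty closed convex set $K$ is singleton; this is exactly the fact quoted in the preliminaries. Hence $P_K^\ast T(x) = \{P_K(x') : x' \in Tx\}$ is a genuine subset of $K$, and because $P_K$ is continuous (being nonexpansive) and $Tx$ is compact (as $T$ maps into $C(X)$, or more strongly into $KC(X)$ in the boundedness setting), its image $P_K^\ast T(x)$ is the continuous image of a compact set, hence compact and in particular closed and nonempty. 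This disposes of well-definedness.

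The core step is the nonexpansiveness estimate, and the key tool is that $P_K$ is itself nonexpansive: $d(P_K(a), P_K(b)) \le d(a,b)$ for all $a,b \in X$. I would estimate the Hausdorff distance by bounding each of the two sup terms in $H$. Fix $x,y \in K$ and take any element $u = P_K(x') \in P_K^\ast T(x)$ with $x' \in Tx$. Since $Tx$ and $Ty$ satisfy $H(Tx,Ty) \le d(x,y)$, for every $\varepsilon > 0$ there exists $y' \in Ty$ with $d(x', y') \le H(Tx,Ty) + \varepsilon \le d(x,y) + \varepsilon$; then $v := P_K(y') \in P_K^\ast T(y)$ and by nonexpansiveness of $P_K$,
\begin{equation*}
d(u, v) = d(P_K(x'), P_K(y')) \le d(x', y') \le d(x,y) + \varepsilon.
\end{equation*}
Taking the infimum over $v \in P_K^\ast T(y)$ and then the supremum over $u \in P_K^\ast T(x)$, and letting $\varepsilon \to 0$, gives $\sup_{u} d(u, P_K^\ast T(y)) \le d(x,y)$. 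By the symmetric argument with the roles of $x$ and $y$ interchanged, the other sup term is likewise bounded by $d(x,y)$, so $H(P_K^\ast T(x), P_K^\ast T(y)) \le d(x,y)$, which is the claim.

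I do not expect a serious obstacle here; the argument is a routine composition-with-a-nonexpansive-projection estimate. The only point that requires care is the measure-theoretic or topological bookkeeping in passing between $Tx$ and $Ty$: the Hausdorff distance $H(Tx,Ty) \le d(x,y)$ gives, for each $x' \in Tx$, only an approximating $y' \in Ty$ up to $\varepsilon$, so one must either carry the $\varepsilon$ through and send it to zero, or invoke compactness of $Ty$ to realize the distance $d(x', Ty)$ at an actual point $y' \in Ty$ and avoid the $\varepsilon$ altogether. Either route works, and using compactness of the values (which holds since $T$ takes values in $KC(X)$) makes the extraction of $y'$ cleanest. I would also remark that the same compactness guarantees $P_K^\ast T(x) \in KC(K)$, so the projected map lands in the class for which the fixed-point theorem (Theorem \ref{the1}) is applicable.
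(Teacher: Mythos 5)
Your proof is correct and takes essentially the same route as the paper's: both reduce the claim to the estimate $H(P_{K}^{\ast }Tx,P_{K}^{\ast }Ty)\leq H(Tx,Ty)\leq d(x,y)$ via the nonexpansiveness of the metric projection $P_{K}$, with your $\varepsilon$-argument simply making explicit the sup--inf comparison that the paper writes in one chain of inequalities. One small caveat: your parenthetical claim that $Tx$ is compact \emph{because} $T$ maps into $C(X)$ is inaccurate ($C(X)$ consists of closed convex, not necessarily compact, sets --- the paper treats compactness of values in a separate lemma), but this is immaterial here, since the statement only requires values in $2^{K}$ and your $\varepsilon$-route never uses compactness.
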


\begin{proof}
Since $K$ is closed, convex and bounded, $P_{K}^{\ast }(Tx)\subset K$. We
also have 
\begin{eqnarray*}
H(P_{K}^{\ast }(Tx),P_{K}^{\ast }(Ty)) &=&\max
\{\sup\limits_{P_{K}(x^{\prime })\in P_{K}^{\ast
}Tx}\inf\limits_{P_{K}(y^{\prime })\in P_{K}^{\ast }Ty}d(P_{K}(x^{\prime
}),P_{K}(y^{\prime }), \\
&&\sup\limits_{P_{K}(y^{\prime })\in P_{K}^{\ast
}Ty}\inf\limits_{P_{K}(x^{\prime })\in P_{K}^{\ast }Tx}d(P_{K}(y^{\prime
}),P_{K}(x^{\prime })\} \\
&\leq &\max \{\sup\limits_{x^{\prime }\in Tx}\inf\limits_{y^{\prime }\in
Ty}d(x^{\prime },y^{\prime }),\sup\limits_{y^{\prime }\in
Ty}\inf\limits_{x^{\prime }\in Tx}d(y^{\prime },x^{\prime }\} \\
&=&H(Tx,Ty) \\
&\leq &d(x,y)\text{.}
\end{eqnarray*}

by the nonexpansiveness of $P_{K}$.\qquad
\end{proof}

\begin{lemma}
If $T$ is compact valued then $P_{K}^{\ast }$ is compact valued.
\end{lemma}

\begin{proof}
Let $(v_{n})\subset P_{K}^{\ast }T(x)$ be a sequence then there is a
sequence $(x_{n}^{\prime })\subset Tx$ such that for all $n\in 
\mathbb{N}
,$ there $v_{n}=P_{K}(x_{n}^{\prime }).$Since $T$ have compact values then $%
(x_{n}^{\prime })$ have convergent subsequence $(x_{n_{k}}^{\prime })$ with $%
\lim_{k\rightarrow \infty }x_{n_{k}}^{\prime }=z\in Tx$ and since for all $%
k\in 
\mathbb{N}
,$%
\begin{equation*}
d(P_{K}(x_{n_{k}}^{\prime }),P_{K}(z))\leq d(x_{n_{k}}^{\prime },z)
\end{equation*}%
we get that the sequence $(v_{n})=(P_{K}(x_{n}^{\prime }))$ have convergent
subsequence $(v_{n_{k}})=(P_{K}(x_{n_{k}}^{\prime }))$ with $%
\lim_{k\rightarrow \infty }(P_{K}(x_{n_{k}}^{\prime }))=P_{K}(z)\in
P_{K}^{\ast }T(x)$ therefore $P_{K}^{\ast }Tx$ is compact.
\end{proof}

\begin{theorem}
\label{teo2}If $T:K\rightarrow KC(X)$. Then there exists a solution $%
(u,x)_{u\in Tx}$ of the variational inequality (\ref{1.2})
\end{theorem}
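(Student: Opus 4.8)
The plan is to reduce the existence of a solution to the variational inequality (\ref{1.2}) to the existence of a fixed point of the projected mapping $P_K^*T$, using the characterization of the metric projection provided by Lemma \ref{lem1}. First I would observe that the two preceding lemmas have shown that $P_K^*T\colon K\to KC(K)$ is a multivalued nonexpansive mapping with compact (and, since $T$ is $KC(X)$-valued, in fact compact convex) values into $K$. Since $K$ is a closed convex subset of a complete $CAT(0)$ space, I would want to invoke a fixed-point theorem to produce a point $x\in K$ with $x\in P_K^*T(x)$; the natural tool is Theorem \ref{the1}, which guarantees a fixed point for multivalued nonexpansive mappings with nonempty compact values on a bounded, complete, uniformly convex metric space (a complete $CAT(0)$ space is uniformly convex, and by the standing assumption of this section $K$ is taken bounded, so that $K$ itself serves as the ambient space).

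Having obtained a fixed point $x\in P_K^*T(x)$, the next step is to unpack the definition of $P_K^*T$. By definition $x\in P_K^*T(x)=\{P_K(x'):x'\in Tx\}$, so there exists some $u\in Tx$ with $x=P_K(u)$. This is exactly the pair $(u,x)_{u\in Tx}$ that the variational inequality calls for, and it remains only to verify the inequality $\langle \overrightarrow{ux},\overrightarrow{xy}\rangle\ge 0$ for all $y\in K$. Here I would apply Lemma \ref{lem1}: since $x=P_K(u)$, the projection characterization yields $\langle \overrightarrow{ux},\overrightarrow{yx}\rangle\le 0$ for every $y\in K$ (reading the stated inequality with the roles matched, $u$ playing the role of the exterior point and $x$ the projection). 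Using the antisymmetry property $\langle \overrightarrow{ab},\overrightarrow{cd}\rangle=-\langle \overrightarrow{ba},\overrightarrow{cd}\rangle$ of the quasilinearization to flip $\overrightarrow{yx}$ into $\overrightarrow{xy}$, this is equivalent to $\langle \overrightarrow{ux},\overrightarrow{xy}\rangle\ge 0$, which is precisely (\ref{1.2}).

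The step I expect to require the most care is the application of Theorem \ref{the1}, because its hypotheses must be matched exactly: the theorem is stated for a mapping on a bounded, complete, uniformly convex \emph{metric} space $X$, whereas I need a fixed point of $P_K^*T$ as a self-map of $K$. The resolution is to take $K$ in the role of the ambient space, which is legitimate precisely because this section assumes $K$ is closed, convex, and (in the definition preceding these lemmas) bounded, so $K$ inherits completeness and uniform convexity from $X$, and the two lemmas guarantee $P_K^*T$ maps $K$ into compact subsets of $K$. A second point deserving attention is the exact bracket orientation in Lemma \ref{lem1}: the statement as printed in the excerpt appears to be missing its right-hand side, so I would interpret it as the standard projection inequality $\langle \overrightarrow{xu},\overrightarrow{yu}\rangle\le 0$ and carry out the antisymmetry manipulation accordingly to land on the sign convention of (\ref{1.2}).
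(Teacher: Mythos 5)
Your proposal is correct and follows essentially the same route as the paper's own proof: a fixed point $x\in P_{K}^{\ast}T(x)$ obtained from Theorem \ref{the1} (justified by the two preceding lemmas), then unpacking $x=P_{K}(u)$ for some $u\in Tx$ and applying Lemma \ref{lem1} together with the antisymmetry of the quasilinearization to obtain $\langle \overrightarrow{ux},\overrightarrow{xy}\rangle \geq 0$ for all $y\in K$. Your explicit attention to taking the bounded set $K$ itself as the ambient space in Theorem \ref{the1}, and to the typographical gaps in Lemma \ref{lem1}, only makes precise what the paper leaves implicit.
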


\begin{proof}
Since $X$ is uniformly convex and $T$ is compact valued, $P_{K}T$ have fixed
point $p\in P_{K}T(p)\subset K$ by Theorem \ref{the1}. There exist $%
p^{\prime }\in Tp$ such that $p=P_{K}(p^{\prime })$ by definition of $%
P_{K}T. $ we have $\langle p^{\prime }p,yp\rangle \leq 0$ for all $y\in K$
by Lemma \ref{lem1}. Hence we have 
\begin{equation*}
\langle p^{\prime }p,yp\rangle \geq 0\text{ for all }y\in K
\end{equation*}%
where \ $p^{\prime }\in Tp$, that is $(p^{\prime },p)_{p^{\prime }\in Tp}$
is a solution of the problem (\ref{1.2}).
\end{proof}

\begin{theorem}
\label{teo3}If $x\in int(K)$ and $(u,x)_{u\in Tx}$ is a solution of problem (%
\ref{1.2}) then $x\in F(T)$, i.e., $u=x.$
\end{theorem}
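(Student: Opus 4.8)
The plan is to exploit the interiority of $x$ by feeding into the variational inequality a test point $y$ taken along the geodesic from $x$ toward $u$, and then to read off from the quasi-linearization identity that $d(x,u)$ must vanish. First, since $x\in \mathrm{int}(K)$, fix $r>0$ with $B(x,r)\subseteq K$. If $d(x,u)=0$ there is nothing to prove, so assume $d(x,u)>0$ and, for $t\in(0,\,r/d(x,u))$, let $y_t$ be the point on the unique geodesic segment $[x,u]$ determined by $d(x,y_t)=t\,d(x,u)$ and $d(u,y_t)=(1-t)\,d(x,u)$. Because $d(x,y_t)=t\,d(x,u)<r$, we have $y_t\in B(x,r)\subseteq K$, so $y_t$ is an admissible test point in (\ref{1.2}).

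Next I would compute $\langle \overrightarrow{ux},\overrightarrow{xy_t}\rangle$ directly from the definition of quasi-linearization. Writing $D=d(x,u)$ and using $d(x,x)=0$, $d(x,y_t)=tD$ and $d(u,y_t)=(1-t)D$, the identity
$$\langle \overrightarrow{ux},\overrightarrow{xy_t}\rangle=\tfrac12\big[d^2(u,y_t)+d^2(x,x)-d^2(u,x)-d^2(x,y_t)\big]$$
collapses, after expanding $(1-t)^2-1-t^2=-2t$, to
$$\langle \overrightarrow{ux},\overrightarrow{xy_t}\rangle=-\,t\,d^2(x,u).$$

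Finally, since $(u,x)_{u\in Tx}$ solves (\ref{1.2}) and $y_t\in K$, the defining inequality $\langle \overrightarrow{ux},\overrightarrow{xy_t}\rangle\geq 0$ gives $-t\,d^2(x,u)\geq 0$. As $t>0$, this forces $d(x,u)=0$, i.e. $u=x$; combined with $u\in Tx$ this yields $x\in Tx$, so $x\in F(T)$.

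I expect the only genuinely delicate point to be the justification that $y_t$ lies in $K$: this is exactly where interiority is indispensable. Since $T$ is non-self, $u\in Tx$ need not belong to $K$, so one cannot simply invoke convexity of $K$ to keep the whole segment $[x,u]$ inside $K$; restricting $t$ so that $y_t$ remains in the ball $B(x,r)\subseteq K$ circumvents this. Everything else reduces to the single quasi-linearization computation above.
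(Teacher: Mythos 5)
Your proof is correct and follows essentially the same route as the paper: both take a test point on the geodesic segment $[x,u]$ close enough to $x$ to lie in the interior ball contained in $K$, evaluate the quasi-linearization exactly, and conclude $d(x,u)=0$ (your parametrization with $t$ near $0$ is just the paper's with $t$ near $1$). The only nitpick is that your range should read $t\in\bigl(0,\min\{1,\,r/d(x,u)\}\bigr)$ so that $y_t$ actually lies on the segment; otherwise your computation $\langle \overrightarrow{ux},\overrightarrow{xy_t}\rangle=-t\,d^{2}(x,u)$ is clean, indeed cleaner than the paper's display, which contains a typographical slip ($2(t^{2}-1)d(x,u)$ in place of $2(t-1)d^{2}(x,u)$).
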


\begin{proof}
There exists $\epsilon >0$ such that $B(x,\epsilon )\subset K.$ Let take $%
t\in (0,1)$ such that $tx\oplus (1-t)u\in B(x,\epsilon ),$ that is, $%
d(x,tx\oplus (1-t)u)=(1-t)d(x,u)<\epsilon $ . Since $B(x,\epsilon )\subset K$
then $tx\oplus (1-t)u\in K$ and $d(u,tx\oplus (1-t)u)=td(u,x)$ so we have%
\begin{eqnarray*}
0 &\leq &2\langle \overrightarrow{ux},\overrightarrow{x(tx\oplus (1-t)u)}%
\rangle \\
&=&d^{2}(u,tx\oplus (1-t)u)-d^{2}(x,u)-d^{2}(x,tx\oplus (1-t)u) \\
&=&t^{2}d^{2}(x,u)-d^{2}(x,u)-(1-t)^{2}d^{2}(x,u) \\
&=&2(t^{2}-1)d(x,u)\leq 0.
\end{eqnarray*}%
and which implies%
\begin{equation*}
2(t-1)d(x,u)=0
\end{equation*}%
since $t\in (0,1)$ then $d(x,u)=0$. Hence $u=x\in Tx$
\end{proof}

If $K$ is not bounded, the problem (\ref{1.2}) does not always have a
solution. However if $o\in X$ be arbitrary and setting $K_{r}=K\cap B(o,r)$
then if\ $K_{r}\neq \emptyset $ By Theorem \ref{teo2} there is $x_{r}\in
K_{r}$ \ such that $(u_{r},x_{r})_{u_{r}\in Tx}$ is a solution of problem%
\begin{equation}
\langle \overrightarrow{u_{r}x_{r}},\overrightarrow{x_{r}y}\rangle \geq 0%
\text{ for all }y\in K_{r}  \label{2.1}
\end{equation}

\begin{theorem}
\label{teo4}The problem (\ref{1.2}) have a solution if and only if there is
a $r>0$ such that the solution of the problem (\ref{2.1}) $%
(u_{r},x_{r})_{u_{r}\in Tx_{r}},x_{r}\in K_{r}$ satisfies $d(o,x_{r})<r.$
\end{theorem}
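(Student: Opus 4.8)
The plan is to prove the two implications separately, with the forward direction being essentially immediate and the reverse direction carrying the real content.

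For the ``only if'' direction, suppose $(u,x)_{u\in Tx}$ solves (\ref{1.2}), so that $\langle \overrightarrow{ux},\overrightarrow{xy}\rangle\ge 0$ for every $y\in K$. I would simply choose any radius $r>d(o,x)$. Then $x\in K\cap B(o,r)=K_{r}$, and since $K_{r}\subseteq K$ the defining inequality continues to hold for all $y\in K_{r}$; hence $(u,x)$ is itself a solution of the truncated problem (\ref{2.1}) with $d(o,x)<r$. Here I read ``the solution of (\ref{2.1})'' existentially: what is needed is the existence of \emph{some} solution of the truncated problem lying strictly inside the ball, and the solution $x$ of the full problem provides one, which also sidesteps any question of uniqueness of the truncated solution.

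For the ``if'' direction, suppose there is $r>0$ and a solution $(u_{r},x_{r})_{u_{r}\in Tx_{r}}$ of (\ref{2.1}) with $d(o,x_{r})<r$; I want to upgrade this to a solution of (\ref{1.2}), i.e. to remove the ball constraint. Fix an arbitrary $y\in K$ (the case $y=x_{r}$ being trivial) and move along the geodesic from $x_{r}$ towards $y$: for $t\in(0,1)$ put $z_{t}=(1-t)x_{r}\oplus t\,y$, so that $d(x_{r},z_{t})=t\,d(x_{r},y)$ and, by convexity of $K$, $z_{t}\in K$. Because $d(o,x_{r})<r$, for all sufficiently small $t$ one has $d(o,z_{t})\le d(o,x_{r})+t\,d(x_{r},y)<r$, so $z_{t}\in K_{r}$ and (\ref{2.1}) applies at $z_{t}$, giving $\langle \overrightarrow{u_{r}x_{r}},\overrightarrow{x_{r}z_{t}}\rangle\ge 0$.

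The heart of the argument is then a computation converting this into an inequality at $y$ itself. Expanding the quasilinearization and applying the $CAT(0)$ inequality with base point $u_{r}$ to $d^{2}(u_{r},z_{t})$, together with $d^{2}(x_{r},z_{t})=t^{2}d^{2}(x_{r},y)$, the terms linear in $t$ telescope and I expect to obtain
\[
\langle \overrightarrow{u_{r}x_{r}},\overrightarrow{x_{r}z_{t}}\rangle
\le t\,\langle \overrightarrow{u_{r}x_{r}},\overrightarrow{x_{r}y}\rangle .
\]
Combined with $\langle \overrightarrow{u_{r}x_{r}},\overrightarrow{x_{r}z_{t}}\rangle\ge 0$ and $t>0$, this forces $\langle \overrightarrow{u_{r}x_{r}},\overrightarrow{x_{r}y}\rangle\ge 0$; since $y\in K$ was arbitrary, $(u_{r},x_{r})$ solves (\ref{1.2}). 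This mirrors the estimate already carried out in the proof of Theorem \ref{teo3}. The main obstacle is precisely checking that the $CAT(0)$ inequality is used in the correct direction: it yields an \emph{upper} bound on $\langle \overrightarrow{u_{r}x_{r}},\overrightarrow{x_{r}z_{t}}\rangle$, and one must verify that after simplification the residual curvature term $-\tfrac{R}{2}t(1-t)d^{2}(x_{r},y)$ (with $R=2$ for $CAT(0)$) combines with $-t^{2}d^{2}(x_{r},y)$ to leave exactly $-t\,d^{2}(x_{r},y)$, so that the bound collapses to $t\,\langle \overrightarrow{u_{r}x_{r}},\overrightarrow{x_{r}y}\rangle$ with no uncontrolled remainder; a sign slip there would reverse the final conclusion.
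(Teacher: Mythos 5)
Your proposal is correct and follows essentially the same route as the paper's own proof: the forward direction by taking any $r>d(o,x)$ and noting $K_{r}\subseteq K$, and the reverse direction by testing (\ref{2.1}) at the geodesic point $(1-t)x_{r}\oplus ty\in K_{r}$ for small $t\in(0,1)$ and combining the quasilinearization expansion with the $CAT(0)$ inequality to obtain $0\leq \langle \overrightarrow{u_{r}x_{r}},\overrightarrow{x_{r}((1-t)x_{r}\oplus ty)}\rangle \leq t\,\langle \overrightarrow{u_{r}x_{r}},\overrightarrow{x_{r}y}\rangle$. The cancellation you single out as the main risk works exactly as you compute, $-t(1-t)-t^{2}=-t$, which is precisely the step carried out in the paper.
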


\begin{proof}
If the problem \ref{1.2} have a solution $(u,x)_{x\in Tx}$ then $(u,x)_{x\in
Tx}$ is a solution of the problem (\ref{2.1}) and $d(o,x)<r$ is satisfied.
Now, let there is a $r>0$ such that the solution of the problem (\ref{2.1}) $%
(u_{r},x_{r})_{u_{r}\in Tx_{r}}$, $x_{r}\in K_{r}$ satisfies $d(o,x_{r})<r$
and $y\in K$ be arbitrary. Then we can chose $t\in (0,1)$ such that $%
(1-t)x_{r}\oplus ty\in B(o,r),$ that is, $(1-t)x_{r}\oplus ty\subset K_{r}$
and $d(x_{r},(1-t)x_{r}\oplus ty)=td(x_{r},y)$. Then%
\begin{eqnarray*}
0 &\leq &2\langle \overrightarrow{u_{r}x_{r}},\overrightarrow{x_{r}y}\rangle
\\
&=&d^{2}(u_{r},(1-t)x_{r}\oplus
ty)-d^{2}(x_{r},u_{r})-d^{2}(x_{r},(1-t)x_{r}\oplus ty) \\
&\leq
&(1-t)d^{2}(u_{r},x_{r})+td^{2}(u_{r},y)-t(1-t)d^{2}(x_{r},y)-d^{2}(x_{r},u_{r})-t^{2}d^{2}(x_{r},y)
\\
&=&2t(d^{2}(u_{r},y)+d^{2}(x_{r},x_{r})-d^{2}(u_{r},x_{r})-d^{2}(x_{r},y)) \\
&=&2t\langle \overrightarrow{u_{r}x_{r}},\overrightarrow{x_{r}y}\rangle 
\text{.}
\end{eqnarray*}%
Hence%
\begin{equation*}
\langle \overrightarrow{u_{r}x_{r}},\overrightarrow{x_{r}y}\rangle \geq 0%
\text{, for all }y\in K\text{ }
\end{equation*}%
that is $(u_{r},x_{r})_{u_{r}\in Tx_{r}}$ is a solution of the problem (\ref%
{1.2}) .
\end{proof}

\begin{theorem}
Let $T:K\rightarrow KC(X)$ and $o\in X$ be fixed. If there exist $x_{0}\in K$
and $u_{0}\in Tx_{0}$ such that 
\begin{equation*}
\frac{\langle \overrightarrow{ux},\overrightarrow{x_{0}x}\rangle -\langle 
\overrightarrow{u_{0}x_{0}},\overrightarrow{x_{0}x}\rangle }{d(x,x_{0})}%
\rightarrow \infty \text{ as }d(x,o)\rightarrow \infty
\end{equation*}%
where $u\in Tx$ \ such that $d(x,u)=d(x,Tx)$ then the problem (\ref{1.2})
have a solution.
\end{theorem}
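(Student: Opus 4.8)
The plan is to reduce the unbounded problem to the bounded ones already handled and to use coercivity to keep the approximate solutions from escaping to infinity. Fix $r>d(o,x_{0})$ so that $x_{0}\in K_{r}:=K\cap B(o,r)$; by Theorem \ref{teo2} the restricted problem \eqref{2.1} admits a solution $(u_{r},x_{r})$ with $x_{r}\in K_{r}$. By Theorem \ref{teo4} it suffices to exhibit a single $r$ for which $d(o,x_{r})<r$. First I would argue by contradiction: if no such $r$ existed, then since $x_{r}\in K_{r}$ forces $d(o,x_{r})\le r$, we would have $d(o,x_{r})=r$ for every admissible $r$, and hence $d(o,x_{r})=r\to\infty$ as $r\to\infty$. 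This is exactly the regime in which the hypothesis can be invoked.

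Next I would extract the decisive inequality directly from the solution. Because $(u_{r},x_{r})$ solves \eqref{2.1} and $x_{0}\in K_{r}$, testing the inequality with $y=x_{0}$ gives $\langle \overrightarrow{u_{r}x_{r}},\overrightarrow{x_{r}x_{0}}\rangle\ge 0$, equivalently $\langle \overrightarrow{u_{r}x_{r}},\overrightarrow{x_{0}x_{r}}\rangle\le 0$ after reversing the second vector. Evaluating the coercivity quotient at $x=x_{r}$ (with $u=u_{r}$), its numerator $\langle \overrightarrow{u_{r}x_{r}},\overrightarrow{x_{0}x_{r}}\rangle-\langle \overrightarrow{u_{0}x_{0}},\overrightarrow{x_{0}x_{r}}\rangle$ then has nonpositive first term, and the Cauchy--Schwarz property $\langle \overrightarrow{ab},\overrightarrow{cd}\rangle\le d(a,b)\,d(c,d)$ controls the second, yielding
\[
\frac{\langle \overrightarrow{u_{r}x_{r}},\overrightarrow{x_{0}x_{r}}\rangle-\langle \overrightarrow{u_{0}x_{0}},\overrightarrow{x_{0}x_{r}}\rangle}{d(x_{r},x_{0})}\le\frac{-\langle \overrightarrow{u_{0}x_{0}},\overrightarrow{x_{0}x_{r}}\rangle}{d(x_{r},x_{0})}\le d(u_{0},x_{0}).
\]
Since $d(o,x_{r})\to\infty$, this bound by the fixed constant $d(u_{0},x_{0})$ contradicts the assumption that the quotient tends to $\infty$. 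Hence some $r$ satisfies $d(o,x_{r})<r$, and Theorem \ref{teo4} then delivers a solution of \eqref{1.2}.

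The hard part will be reconciling the nearest point $u$ built into the coercivity hypothesis (where $d(x,u)=d(x,Tx)$) with the point $u_{r}$ actually furnished by \eqref{2.1}. The solution produced through Theorem \ref{teo2} is a fixed point of $P^{\ast}_{K_{r}}T$, so $x_{r}=P_{K_{r}}(u_{r})$ with $u_{r}\in Tx_{r}$, and by Lemma \ref{lem1} this forces $u_{r}$ to ``point outward'' from $K_{r}$; when $x_{r}$ lies on the sphere of radius $r$ this need not be the nearest element of $Tx_{r}$. To make the estimate above legitimate one must therefore either verify that the solution can be chosen with $u_{r}$ realizing $d(x_{r},Tx_{r})$, or recast the coercivity quotient in terms of the actual solution value $u_{r}$; this compatibility is the genuine obstacle. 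The remaining ingredients---that $K_{r}\neq\emptyset$ and contains $x_{0}$ for all large $r$, and that the antisymmetry of the quasilinearization in each slot is applied with the correct signs---are routine.
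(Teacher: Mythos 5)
Your proposal follows the paper's own route almost step for step: truncate to $K_{r}=K\cap B(o,r)$, solve (\ref{2.1}) by Theorem \ref{teo2}, test the solution with $y=x_{0}\in K_{r}$ to get $\langle \overrightarrow{u_{r}x_{r}},\overrightarrow{x_{0}x_{r}}\rangle \leq 0$, control the remaining term $-\langle \overrightarrow{u_{0}x_{0}},\overrightarrow{x_{0}x_{r}}\rangle$ by $d(u_{0},x_{0})\,d(x_{0},x_{r})$ via Cauchy--Schwarz, and conclude from coercivity that $d(o,x_{r})<r$ for large $r$, at which point Theorem \ref{teo4} finishes. The paper phrases the same computation contrapositively: for $d(x,o)\geq r$ with $r$ large it derives $\langle \overrightarrow{ux},\overrightarrow{x_{0}x}\rangle \geq -d(u_{0},x_{0})d(x_{0},x)+Md(u_{0},x_{0})$, which is incompatible with the test inequality, and so asserts $d(x_{r},o)<r$.

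The compatibility problem you isolate in your final paragraph is a genuine gap, but you should know it is a gap in the paper's proof as well, not only in yours. The coercivity hypothesis concerns the selection $u\in Tx$ with $d(x,u)=d(x,Tx)$, whereas Theorem \ref{teo2} produces its solution as a fixed point of the projected mapping, i.e. $x_{r}=P_{K_{r}}(u_{r})$ for some $u_{r}\in Tx_{r}$ that has no reason to realize $d(x_{r},Tx_{r})$; the paper nevertheless applies the coercivity bound to the pair $(u_{r},x_{r})$ without comment. Since $u_{r}$ and the nearest point of $Tx_{r}$ to $x_{r}$ are unrelated in general (unless $Tx_{r}$ is a singleton, e.g. when $T$ is single-valued), neither argument is complete as written. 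The cheapest repair is to read the hypothesis as holding uniformly over $u\in Tx$, that is, with $\langle \overrightarrow{ux},\overrightarrow{x_{0}x}\rangle$ replaced by $\inf_{u\in Tx}\langle \overrightarrow{ux},\overrightarrow{x_{0}x}\rangle$; with that reading your estimate applies verbatim to $(u_{r},x_{r})$ and the proof closes. In short: same approach as the paper, same unresolved step, and your closing observation is in fact a correct criticism of the published argument.
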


\begin{proof}
Let $R,M\in 
\mathbb{R}
$ such that $d(u_{0},u_{0})<M$, $d(x_{0},o)<r$ \ and 
\begin{equation*}
\langle \overrightarrow{ux},\overrightarrow{x_{0}x}\rangle -\langle 
\overrightarrow{u_{0}x_{0}},\overrightarrow{x_{0}x}\rangle \geq
Md(u_{0},x_{0})
\end{equation*}%
for all $x\in K,$ $d(x,o)\geq r$. Then%
\begin{eqnarray*}
\langle \overrightarrow{ux},\overrightarrow{x_{0}x}\rangle &\geq &\langle 
\overrightarrow{u_{0}x_{0}},\overrightarrow{x_{0}x}\rangle +Md(u_{0},x_{0})
\\
&\geq &-d(u_{0},x_{0})d(x_{0},x)+Md(u_{0},x_{0}) \\
&\geq &(M-d(x_{0},x))d(u_{0},x_{0}) \\
&\geq &(M-d(x_{0},x))(d(x,o)-d(x_{0},o))
\end{eqnarray*}%
for $r=d(x,o)$. If $(u_{r},x_{r})_{u_{r}\in Tx_{r}}$ is a solution of the
problem (\ref{2.1}) then since%
\begin{equation*}
\langle \overrightarrow{u_{r}x_{r}},\overrightarrow{x_{0}x_{r}}\rangle
=-\langle \overrightarrow{u_{r}x_{r}},\overrightarrow{x_{r}x_{0}}\rangle
\leq 0
\end{equation*}%
holds so we have $d(x_{r},o)<r$. Hence by Theorem \ref{teo4} the problem (%
\ref{1.2}) has a solution.
\end{proof}

\section{Convergence Results to The Solutions}

In this section, it is assumed that $X$ is a complete $CAT(0)$ and $K$ is a
nonempty, closed and convex subset of $X$.

\begin{theorem}
\label{(teo4.3)}If $T:K\rightarrow KC(X)$ is a nonexpansive mapping and $%
\{x_{n}\}$ is a bounded sequence in $K$ with $\Delta -\lim_{n\rightarrow
\infty }x_{n}=z$ and $\lim_{n\rightarrow \infty }d(x_{n},Tx_{n})=0$ then $%
z\in K$ and $z\in T(z).$
\end{theorem}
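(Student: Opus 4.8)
The plan is to establish the two conclusions separately: first the membership $z\in K$, which is immediate, and then the substantive claim $z\in Tz$, which is a multivalued demiclosedness principle obtained through the asymptotic-center machinery.

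For $z\in K$: since $K$ is closed and convex and $\{x_{n}\}$ is a bounded sequence in $K$, part (ii) of Lemma \ref{(lemma2.10)} guarantees that the asymptotic center of $\{x_{n}\}$ lies in $K$. Because $\Delta\text{-}\lim_{n}x_{n}=z$ means exactly that $z$ is that asymptotic center (Definition \ref{(def2.8)}), we conclude $z\in K$.

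For $z\in Tz$: the idea is to produce a point $u\in Tz$ that is itself an asymptotic center of a suitable subsequence, and then to force $u=z$ by uniqueness. Since $Tz\in KC(X)$ is compact, for each $n$ I would choose $u_{n}\in Tz$ realizing the distance, $d(x_{n},u_{n})=d(x_{n},Tz)$, and pass to a subsequence $\{u_{n_{k}}\}$ converging to some $u\in Tz$. The key estimate chains the triangle inequality with the Hausdorff bound $d(p,B)\le d(p,A)+H(A,B)$, valid for any point $p$ and sets $A,B$, to obtain
\begin{equation*}
d(x_{n_{k}},Tz)\leq d(x_{n_{k}},Tx_{n_{k}})+H(Tx_{n_{k}},Tz)\leq d(x_{n_{k}},Tx_{n_{k}})+d(x_{n_{k}},z),
\end{equation*}
where nonexpansiveness of $T$ is used in the last step. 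Combining this with $d(x_{n_{k}},u)\leq d(x_{n_{k}},Tz)+d(u_{n_{k}},u)$ and taking $\limsup_{k}$, the hypotheses $d(x_{n},Tx_{n})\to 0$ and $u_{n_{k}}\to u$ annihilate the extra terms, yielding $r(u,\{x_{n_{k}}\})\leq r(z,\{x_{n_{k}}\})$.

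Finally, since $z$ is the asymptotic center of the subsequence $\{x_{n_{k}}\}$ one has $r(z,\{x_{n_{k}}\})=r(\{x_{n_{k}}\})$, so the inequality above forces $r(u,\{x_{n_{k}}\})=r(\{x_{n_{k}}\})$, i.e. $u$ is also an asymptotic center of $\{x_{n_{k}}\}$. By $\Delta$-convergence, $z$ is the \emph{unique} asymptotic center of every subsequence of $\{x_{n}\}$, whence $u=z$, and therefore $z=u\in Tz$. I expect the main obstacle to be the bookkeeping in the chained inequality — in particular justifying $d(p,B)\le d(p,A)+H(A,B)$ and verifying that passing to $\{x_{n_{k}}\}$ preserves the asymptotic-center identity, which is precisely the point where $\Delta$-convergence, rather than mere boundedness, is essential.
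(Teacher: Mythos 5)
Your proposal is correct and follows essentially the same route as the paper: both arguments use compactness of $Tz$ to extract a convergent sequence of nearest points, bound $\limsup_{k} d(x_{n_{k}},u)$ by $\limsup_{k} d(x_{n_{k}},z)$ via the triangle inequality, the Hausdorff-distance bound and nonexpansiveness of $T$, and then conclude $u=z$ from uniqueness of the asymptotic center under $\Delta$-convergence. The only differences are cosmetic: you project $x_{n}$ directly onto $Tz$ using $d(p,B)\leq d(p,A)+H(A,B)$, whereas the paper routes through intermediate nearest points $y_{n}\in Tx_{n}$, and you spell out the final uniqueness step that the paper leaves implicit.
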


\begin{proof}
By Lemma \ref{(lemma2.10)}, $z\in K.$We can find a sequence $\{y_{n}\}$ such
that $y_{n}\in Tx_{n},$ $d(x_{n},y_{n})=d(x_{n},Tx_{n}),$ so we have $%
\lim_{n\rightarrow \infty }d(x_{n},y_{n})=0$ and we can find a sequence $%
\{z_{n}\}$ in $Tz$ such that $d(y_{n},z_{n})=d(y_{n},Tz).$ Then Since $Tz$
is compact, there is a convergent subsequence $\{z_{n_{i}}\}$ of $\{z_{n}\},$
say $\lim_{i\rightarrow \infty }z_{n_{i}}=u\in Tz$.%
\begin{eqnarray*}
d(x_{n_{i}},u) &\leq
&d(x_{n_{i}},y_{n_{i}})+d(y_{n_{i}},z_{n_{i}})+d(z_{n_{i}},u) \\
&\leq &d(x_{n_{i}},y_{n_{i}})+d(y_{n_{i}},Tz)+d(z_{n_{i}},u) \\
&\leq &d(x_{n_{i}},y_{n_{i}})+H(Tx_{n_{i}},Tz)+d(z_{n_{i}},u) \\
&\leq &d(x_{n_{i}},y_{n_{i}})+H(Tx_{n_{i}},Tz)+d(z_{n_{i}},u)
\end{eqnarray*}%
implies that \ $\limsup_{i\rightarrow \infty }d(x_{n_{i}},u)\leq
\limsup_{i\rightarrow \infty }H(Tx_{n_{i}},Tz)$ and $\Delta
-\lim_{i\rightarrow \infty }x_{n_{i}}=z$ Because of $T$ is multivalued
nonexpansive mapping,%
\begin{equation*}
H(Tx_{n_{i}},Tz)\leq d(x_{n_{i}},z)
\end{equation*}%
which implies that 
\begin{equation*}
\limsup_{i\rightarrow \infty }d(x_{n_{i}},u)\leq \limsup_{i\rightarrow
\infty }H^{2}(Tx_{n_{i}},Tz)\leq \limsup_{i\rightarrow \infty
}d^{2}(x_{n_{i}},z)
\end{equation*}%
which implies that $z=u\in Tz.$
\end{proof}

\begin{lemma}
\label{(lemma4.3)}If $T:K\rightarrow KC(X)$ is a nonexpansive mapping and $%
\{x_{n}\}$ is a bounded sequence in $K$ with $\lim_{n\rightarrow \infty
}d(x_{n},Tx_{n})=0$ and $\{d(x_{n},p)\}$ converges for all $p\in F(T)$ then $%
\omega _{w}(x_{n})\subseteq F(T)$ and $\omega _{w}(x_{n})$ include exactly
one point.
\end{lemma}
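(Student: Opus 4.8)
The plan is to prove the two assertions in turn: first the inclusion $\omega_w(x_n)\subseteq F(T)$, and then to use that inclusion together with the convergence hypothesis to force $\omega_w(x_n)$ to reduce to a single point. Both steps are a matter of chaining Theorem \ref{(teo4.3)} with Lemmas \ref{(lemma2.10)} and \ref{(lemma2.11)}.

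For the inclusion, I would fix an arbitrary $u\in\omega_w(x_n)$. By the definition of $\omega_w$, there is a subsequence $\{u_n\}$ of $\{x_n\}$ with $A(\{u_n\})=\{u\}$. Since $\{u_n\}$ is bounded, Lemma \ref{(lemma2.10)}(i) provides a further subsequence $\{v_n\}$ that is $\Delta$-convergent to some $v$, and Lemma \ref{(lemma2.10)}(ii) gives $v\in K$. Because $\{v_n\}$ is itself a subsequence of $\{x_n\}$, we still have $\lim_n d(v_n,Tv_n)=0$, so Theorem \ref{(teo4.3)} applies and yields $v\in F(T)$. It then remains to identify $u$ with $v$: since $v\in F(T)$, the standing hypothesis guarantees that $\{d(x_n,v)\}$ converges, hence so does the subsequence $\{d(u_n,v)\}$. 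As $\{v_n\}$ is a subsequence of $\{u_n\}$ with $A(\{v_n\})=\{v\}$ while $A(\{u_n\})=\{u\}$, Lemma \ref{(lemma2.11)} forces $u=v$, and therefore $u\in F(T)$.

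For uniqueness, write $A(\{x_n\})=\{x\}$, which is a single point because the asymptotic center of a bounded sequence in a complete $CAT(0)$ space is unique. Given any $u\in\omega_w(x_n)$, the inclusion already proved gives $u\in F(T)$, so $\{d(x_n,u)\}$ converges by hypothesis. Choosing the subsequence $\{u_n\}$ realizing $A(\{u_n\})=\{u\}$ and applying Lemma \ref{(lemma2.11)} to the pair $\{x_n\}$, $\{u_n\}$ yields $x=u$. Since $u$ was arbitrary, every element of $\omega_w(x_n)$ coincides with $x$, so $\omega_w(x_n)=\{x\}$ consists of exactly one point.

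The step I expect to be the main obstacle is the passage in the first part from the asymptotic center $u$ of $\{u_n\}$ to the $\Delta$-limit $v$ of the extracted sub-subsequence. The delicate point is that Lemma \ref{(lemma2.11)} requires the convergence of $\{d(u_n,v)\}$, and this is available only after $v$ has been shown to lie in $F(T)$, so the application of Theorem \ref{(teo4.3)} must be carried out before one attempts to collapse the two centers. Once that ordering is respected, the remaining manipulations are routine.
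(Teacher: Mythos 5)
Your proof is correct and follows essentially the same route as the paper's: pass from $u\in\omega_w(x_n)$ to a subsequence with asymptotic center $u$, extract a further $\Delta$-convergent subsequence, apply Theorem \ref{(teo4.3)} to place its $\Delta$-limit $v$ in $F(T)$, and then collapse $u=v$ (and finally $u=x$) via Lemma \ref{(lemma2.11)}. If anything, your write-up is more careful than the paper's, since you make explicit that the convergence of $\{d(u_n,v)\}$ needed for Lemma \ref{(lemma2.11)} is only available \emph{after} $v\in F(T)$ has been established, a point the paper's proof leaves implicit.
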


\begin{proof}
Let take $u\in \omega _{w}(x_{n})$ then there exist subsequence $\{u_{n}\}$
of $\{x_{n}\}$ with $A(\{u_{n}\})=\{u\}.$Then by Lemma \ref{(lemma2.10)}
there exist subsequence $\{v_{n}\}$ of $\{u_{n}\}$ with $\Delta
-\lim_{n\rightarrow \infty }v_{n}=v\in K$ . Then by Theorem \ref{(teo4.3)}
we have $v\in F(T)$ and by Lemma \ref{(lemma2.11)} we conclude that $u=v,$
hence we get $\omega _{w}(x_{n})\subseteq F(T)$. Let take subsequence $%
\{u_{n}\}$ of $\{x_{n}\}$with $A(\{u_{n}\})=\{u\}$ and $A(\{x_{n}\})=\{x\}.$
Because of $v\in \omega _{w}(x_{n})\subseteq F(T)$, $\{d(x_{n},u)\}$
converges, so by Lemma \ref{(lemma2.11)} we have $x=u,$ this means that $%
\omega _{w}(x_{n})$ include exactly one point.
\end{proof}

\begin{theorem}
\label{teo4.4}If $T:K\rightarrow C(X)$ is a nonexpansive mapping with $%
F(T)\neq \emptyset $ and $Tp=\{p\}$ for all $p\in F(T)$ and $\{x_{n}\}$ is a
sequence in $K$ \ defined by (\ref{1.1}) with $\lim \inf_{n\rightarrow
\infty }\beta _{n}(1-\beta _{n})>0$ then $\{x_{n}\}$ is bounded, $%
\lim_{n\rightarrow \infty }d(x_{n},Tx_{n})=0$ and $\{d(x_{n},p)\}$ converges
for all $p\in F(T).$
\end{theorem}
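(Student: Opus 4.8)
The plan is to establish the convergence of $\{d(x_n,p)\}$ first, from which boundedness of $\{x_n\}$ is immediate, and to treat the asymptotic regularity $d(x_n,Tx_n)\to 0$ last. Fix $p\in F(T)$. Since $p\in K$ we have $P_K(p)=p$, and since $Tp=\{p\}$, every $a\in Tx$ satisfies $d(a,p)=d(a,Tp)\le H(Tx,Tp)\le d(x,p)$; in particular $d(w_n,p)\le d(x_n,p)$ for $w_n\in Tx_n$, $d(v_n,p)\le d(z_n,p)$ for $v_n\in Tz_n$, and $d(u_n,p)\le d(y_n,p)$ for $u_n\in Ty_n$. Using the nonexpansiveness of $P_K$ together with the convexity inequality $d((1-\lambda)a\oplus\lambda b,p)\le(1-\lambda)d(a,p)+\lambda d(b,p)$, I would estimate successively
\[
d(z_n,p)\le(1-\beta_n)d(x_n,p)+\beta_n d(w_n,p)\le d(x_n,p),
\]
then $d(y_n,p)\le(1-\alpha_n)d(w_n,p)+\alpha_n d(v_n,p)\le d(x_n,p)$, and finally $d(x_{n+1},p)\le d(u_n,p)\le d(y_n,p)\le d(x_n,p)$. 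Thus $\{d(x_n,p)\}$ is non-increasing and bounded below, hence convergent, and boundedness of $\{x_n\}$ follows at once.

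For the asymptotic regularity the plan is to rerun the same chain of estimates but with the quadratic $CAT(0)$ inequality in place of the convexity inequality, so as to retain the negative curvature term. Applied to $\tilde z_n=(1-\beta_n)x_n\oplus\beta_n w_n$, and using $d^2(z_n,p)\le d^2(\tilde z_n,p)$ together with $d(w_n,p)\le d(x_n,p)$, this gives
\[
d^2(z_n,p)\le d^2(x_n,p)-c\,\beta_n(1-\beta_n)\,d^2(x_n,w_n),
\]
with $c>0$ the constant of the $CAT(0)$ inequality. Feeding this into the analogous quadratic estimate for $y_n$ and using $d^2(x_{n+1},p)\le d^2(u_n,p)\le d^2(y_n,p)$, I expect to reach a recursion of the form
\[
d^2(x_{n+1},p)\le d^2(x_n,p)-c\,\alpha_n\beta_n(1-\beta_n)\,d^2(x_n,w_n).
\]
Telescoping over $n$ and using $d^2(x_{N+1},p)\ge0$ then yields $\sum_n \alpha_n\beta_n(1-\beta_n)d^2(x_n,w_n)\le d^2(x_0,p)<\infty$, so the general term tends to $0$.

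The main obstacle is exactly the passage from this summability to $d(x_n,w_n)\to 0$. The hypothesis $\liminf_n\beta_n(1-\beta_n)>0$ is precisely what forces $\{\beta_n\}$ to stay bounded away from both $0$ and $1$, so the factor $\beta_n(1-\beta_n)$ is non-degenerate; however the factor $\alpha_n$ also survives the telescoping, so passing from $\alpha_n\beta_n(1-\beta_n)d^2(x_n,w_n)\to0$ to $d(x_n,w_n)\to0$ additionally requires $\{\alpha_n\}$ bounded away from $0$. I would therefore either carry the extra hypothesis $\liminf_n\alpha_n>0$ (which appears to be implicitly needed, since for $\alpha_n\equiv0$ the point $z_n$ never influences $x_{n+1}$) or look for a reorganisation of the estimates yielding a gain coefficient independent of $\alpha_n$. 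Once $d(x_n,w_n)\to0$ is established, the inequality $d(x_n,Tx_n)\le d(x_n,w_n)$, valid because $w_n\in Tx_n$, immediately gives $\lim_{n\to\infty}d(x_n,Tx_n)=0$.
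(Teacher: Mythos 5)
Your first half is correct and follows the paper's strategy: the chain $d(x_{n+1},p)\le d(u_n,p)\le d(y_n,p)\le d(x_n,p)$, monotonicity of $\{d(x_n,p)\}$, hence convergence and boundedness. The obstacle you isolate in the second half is also genuine, and it is important to say clearly that it is not a deficiency of your argument but a defect of Theorem \ref{teo4.4} itself as stated for the iteration (\ref{1.1}). For (\ref{1.1}) as literally defined, the curvature term produced at the $z_n$-step can only reach $x_{n+1}$ through $y_n=P_K((1-\alpha_n)w_n\oplus\alpha_n v_n)$, so it is unavoidably multiplied by $\alpha_n$, exactly as in your recursion $d^2(x_{n+1},p)\le d^2(x_n,p)-c\,\alpha_n\beta_n(1-\beta_n)d^2(x_n,w_n)$. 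No reorganisation of the estimates can remove this factor, because the conclusion itself fails without a hypothesis on $\alpha_n$: take $X=K$ a closed disc in the Euclidean plane, $T$ the rotation about the centre $o$ by a fixed angle $\theta\neq 0$ (single valued, nonexpansive, $F(T)=\{o\}$, $To=\{o\}$), $\beta_n\equiv 1/2$ and $\alpha_n\equiv 0$; then $w_n=Tx_n$, $y_n=Tx_n$, $x_{n+1}=T^2x_n$, and since $T$ is an isometry, $d(x_n,Tx_n)=d(x_0,Tx_0)>0$ for every $n$, so $\lim_n d(x_n,Tx_n)=0$ is false while all stated hypotheses hold. Your proposed additional hypothesis $\liminf_n\alpha_n>0$ repairs the statement, and with it your telescoping argument completes the proof.

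The reason the paper's own proof does not meet the $\alpha_n$ factor is that it silently works with a different scheme than (\ref{1.1}). Its first display estimates $P_K((1-\beta_n)x_n\oplus\beta_n w_n)$ (the point that (\ref{1.1}) calls $z_n$, with $w_n\in Tx_n$ mislabelled $v_n$) and calls it $y_n$; its second display then uses $x_{n+1}=P_K((1-\alpha_n)y_n\oplus\alpha_n u_n)$ with $u_n\in Ty_n$, which is not the rule $x_{n+1}=P_K(u_n)$ of (\ref{1.1}). For that two-step scheme one indeed obtains $d^2(y_n,p)\le d^2(x_n,p)-\beta_n(1-\beta_n)d^2(x_n,Tx_n)$ together with $d(x_{n+1},p)\le d(y_n,p)\le d(x_n,p)$, so the telescoping closes with the $\beta$-hypothesis alone. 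In short: either the iteration must be changed to the one the paper's proof actually analyses, or a condition such as your $\liminf_n\alpha_n>0$ must be added to the theorem; your blind analysis of the literal statement is the correct one.
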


\begin{proof}
Let $p\in F(T)$ then for any $x\in K,$we have that%
\begin{equation*}
d(Tx,p)\leq H(Tx,Tp)\leq d(x,p)
\end{equation*}%
since metric projection $P_{K}$ is nonexpansive and $P_{K}(p)=\{x\in
K:d(p,x)=d(p,K)\}=\{p\}$ we have%
\begin{eqnarray*}
d^{2}(y_{n},p) &=&d^{2}(P_{K}((1-\beta _{n})x_{n}\oplus \beta
_{n}v_{n}),P_{K}(p)) \\
&\leq &d^{2}((1-\beta _{n})x_{n}\oplus \beta _{n}v_{n},p) \\
&\leq &(1-\beta _{n})d^{2}(x_{n},p)+\beta _{n}d^{2}(v_{n},p) \\
&&-(1-\beta _{n})\beta _{n}d^{2}(x_{n},v_{n}) \\
&\leq &(1-\beta _{n})d^{2}(x_{n},p)+\beta _{n}d^{2}(v_{n},Tp) \\
&&-(1-\beta _{n})\beta _{n}d^{2}(x_{n},Tx_{n}) \\
&\leq &(1-\beta _{n})d^{2}(x_{n},p)+\beta _{n}H^{2}(Tx_{n},Tp) \\
&&-(1-\beta _{n})\beta _{n}d^{2}(x_{n},Tx_{n}) \\
&\leq &(1-\beta _{n})d^{2}(x_{n},p)+\beta _{n}d^{2}(x_{n},p) \\
&&-(1-\beta _{n})\beta _{n}d^{2}(x_{n},Tx_{n}) \\
&\leq &d^{2}(x_{n},p)-(1-\beta _{n})\beta _{n}d^{2}(x_{n},Tx_{n}) \\
&\leq &d^{2}(x_{n},p)
\end{eqnarray*}%
and 
\begin{eqnarray*}
d^{2}(x_{n+1},p) &=&d^{2}(P_{K}((1-\alpha _{n})y_{n}\oplus \alpha
_{n}u_{n}),P_{K}(p)) \\
&\leq &d^{2}((1-\alpha _{n})y_{n}\oplus \alpha _{n}u_{n}),p) \\
&\leq &(1-\alpha _{n})d^{2}(y_{n},p)+\alpha _{n}d^{2}(u_{n},p) \\
&&-(1-\alpha _{n})\alpha _{n}d^{2}(y_{n},u_{n}) \\
&\leq &(1-\alpha _{n})d^{2}(y_{n},p)+\alpha _{n}d^{2}(u_{n},Tp) \\
&&-(1-\alpha _{n})\alpha _{n}d^{2}(y_{n},Ty_{n}) \\
&\leq &(1-\alpha _{n})d^{2}(y_{n},p)+\alpha _{n}H^{2}(Ty_{n},Tp) \\
&&-(1-\alpha _{n})\alpha _{n}d^{2}(y_{n},Ty_{n}) \\
&\leq &(1-\alpha _{n})d^{2}(y_{n},p)+\alpha _{n}d^{2}(y_{n},p) \\
&&-(1-\alpha _{n})\alpha _{n}d^{2}(y_{n},Ty_{n}) \\
&\leq &d^{2}(y_{n},p)-(1-\alpha _{n})\alpha _{n}d^{2}(y_{n},Ty_{n}) \\
&\leq &d^{2}(y_{n},p) \\
&\leq &d^{2}(x_{n},p).
\end{eqnarray*}%
Here we have $d^{2}(x_{n+1},p)\leq d^{2}(x_{n},p)$ implies that $%
\lim_{n\rightarrow \infty }d(x_{n},p)$ exists, it is bounded,and $%
d(x_{n+1},p)\leq d(y_{n},p)\leq d(x_{n},p)$ implies $\lim_{n\rightarrow
\infty }[d(x_{n},p)-d(y_{n},p)]=0$. Since $\beta _{n}(1-\beta
_{n})d^{2}(Tx_{n},x_{n}))\leq d^{2}(x_{n},p)-d^{2}(y_{n},p),$by assumption
we have that $\lim_{n\rightarrow \infty }d^{2}(Tx_{n},x_{n})=0,$so $%
\lim_{n\rightarrow \infty }d(Tx_{n},x_{n})=0$
\end{proof}

\begin{theorem}
\label{teo4.5}If $T:K\rightarrow KC(X)$ is a nonexpansive mapping with $%
F(T)\neq \emptyset $ and $Tp=\{p\}$ for all $p\in F(T)$ and $\{x_{n}\}$ is a
sequence in $K$ \ defined by (\ref{1.1}) with $\lim \inf_{n\rightarrow
\infty }\beta _{n}(1-\beta _{n})>0$ then $\{x_{n}\}$ is $\Delta -$convergent
to $p\in $ $F(T)$ where $(p,p)$ is a solution of the problem (\ref{1.2})
\end{theorem}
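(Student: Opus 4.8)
The plan is to assemble the three convergence facts already proved and then observe that the variational-inequality assertion is automatic. First I would apply Theorem \ref{teo4.4}. Its hypotheses require $T:K\to C(X)$, and since a compact convex set is in particular closed and convex we have $KC(X)\subseteq C(X)$, so the present $T:K\to KC(X)$ qualifies; the remaining conditions $F(T)\neq\emptyset$, $Tp=\{p\}$ for $p\in F(T)$, and $\liminf_n\beta_n(1-\beta_n)>0$ are identical. Theorem \ref{teo4.4} then yields that $\{x_n\}$ is bounded, that $\lim_{n\to\infty}d(x_n,Tx_n)=0$, and that $\{d(x_n,p)\}$ converges for every $p\in F(T)$.

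These are exactly the hypotheses of Lemma \ref{(lemma4.3)}, which additionally requires $T$ to be $KC(X)$-valued (i.e. compact-valued); this holds here. Invoking it gives $\omega_w(x_n)\subseteq F(T)$ together with the fact that $\omega_w(x_n)$ consists of a single point, which I call $p$, so that $p\in F(T)$.

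Next I would deduce $\Delta$-convergence from this singleton. By Lemma \ref{(lemma2.10)}(ii) every subsequence of the bounded sequence $\{x_n\}$ has a nonempty asymptotic center contained in $K$, and $\omega_w(x_n)$ is by definition the union of all these subsequential centers. Since that union equals $\{p\}$, each subsequential center is a nonempty subset of $\{p\}$, hence equals $\{p\}$; thus every subsequence has the unique asymptotic center $p$, which is exactly the statement $\Delta-\lim_n x_n = p$ in Definition \ref{(def2.8)}, with $p\in F(T)$.

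It remains to check that $(p,p)$ solves (\ref{1.2}). Because $p\in F(T)$ and $Tp=\{p\}$, the only element of $Tp$ is $p$ itself, so the candidate pair is $(p,p)$. Using the quasi-linearization formula, $\langle\overrightarrow{pp},\overrightarrow{py}\rangle=\frac{1}{2}[d^2(p,y)+d^2(p,p)-d^2(p,p)-d^2(p,y)]=0\geq0$ for every $y\in K$, so (\ref{1.2}) holds trivially. I expect no genuine obstacle here: the entire argument is bookkeeping that aligns the hypotheses of Theorem \ref{teo4.4} and Lemma \ref{(lemma4.3)}, after which both the $\Delta$-convergence and the variational-inequality conclusion follow from the asymptotic-center lemmas and the vanishing of the displacement $\overrightarrow{pp}$.
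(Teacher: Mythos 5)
Your proof is correct and follows essentially the same route as the paper: Theorem \ref{teo4.4} supplies boundedness, $\lim_{n\to\infty}d(x_n,Tx_n)=0$ and convergence of $\{d(x_n,p)\}$, and Lemma \ref{(lemma4.3)} then gives that $\omega_w(x_n)$ is a single point of $F(T)$, which is precisely $\Delta$-convergence. The only difference is that you fill in details the paper leaves implicit --- the compatibility of the hypotheses ($KC(X)\subseteq C(X)$), the passage from $\omega_w(x_n)=\{p\}$ to Definition \ref{(def2.8)}, and the verification via quasi-linearization that $\langle\overrightarrow{pp},\overrightarrow{py}\rangle=0$ makes $(p,p)$ a (trivial) solution of (\ref{1.2}) --- all of which are sound.
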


\begin{proof}
Since we have $\lim_{n\rightarrow \infty }d(x_{n},Tx_{n})=0$, $%
\{d(x_{n},p)\} $ converges for all $p\in F(T)$ and $\{x_{n}\}$ is bounded by
Theorem \ref{teo4.4} then it follows from Lemma \ref{(lemma4.3)} that $%
\omega _{w}(x_{n})\subseteq F(T)$ and $\omega _{w}(x_{n})$ include exactly
one point $p\in $ $F(T)$ where $(p,p)$ is a solution of the problem \ref{1.2}
\end{proof}

\begin{theorem}
Let $K$ be also compact and $T:K\rightarrow C(X)$ be a nonexpansive mapping
with $F(T)\neq \emptyset $ and $Tp=\{p\}$ for all $p\in F(T)$. If $\{x_{n}\}$
is a sequence in $K$ \ defined by (\ref{1.1}) with $\lim \inf_{n\rightarrow
\infty }\beta _{n}(1-\beta _{n})>0$ then $\{x_{n}\}$ strongly converges to $%
q\in F(T).$where $(q,q)$ is a solution of the problem (\ref{1.2})
\end{theorem}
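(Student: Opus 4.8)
The plan is to combine the compactness of $K$ with the three conclusions already extracted in Theorem \ref{teo4.4}. Since the hypotheses here are identical to those of Theorem \ref{teo4.4} (a nonexpansive $T:K\rightarrow C(X)$ with $F(T)\neq\emptyset$ and $Tp=\{p\}$ on $F(T)$, together with the iterate (\ref{1.1}) satisfying $\liminf_{n}\beta_n(1-\beta_n)>0$), I would first invoke it to conclude that $\{x_n\}$ is bounded, that $\lim_{n\rightarrow\infty}d(x_n,Tx_n)=0$, and that $\{d(x_n,p)\}$ converges for every $p\in F(T)$. These three facts are what the remainder of the argument rests on.

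Next, since $K$ is compact, the sequence $\{x_n\}$ admits a subsequence $\{x_{n_k}\}$ converging strongly to some $q\in K$. To show $q\in F(T)$, for each $k$ I would choose $y_{n_k}\in Tx_{n_k}$ with $d(x_{n_k},y_{n_k})=d(x_{n_k},Tx_{n_k})$ and use the triangle inequality together with the nonexpansiveness of $T$:
\begin{equation*}
d(q,Tq)\leq d(q,x_{n_k})+d(x_{n_k},y_{n_k})+d(y_{n_k},Tq)\leq d(q,x_{n_k})+d(x_{n_k},Tx_{n_k})+H(Tx_{n_k},Tq),
\end{equation*}
where the last term is at most $d(x_{n_k},q)$. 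Letting $k\rightarrow\infty$, the first and third terms vanish by strong convergence and the middle term vanishes by Theorem \ref{teo4.4}, so $d(q,Tq)=0$. Because $Tq\in C(X)$ is closed, this forces $q\in Tq$, i.e. $q\in F(T)$.

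I would then upgrade subsequential convergence to convergence of the whole sequence. Having established $q\in F(T)$, Theorem \ref{teo4.4} ensures that $\{d(x_n,q)\}$ converges; since its subsequence $\{d(x_{n_k},q)\}$ tends to $0$, the full limit is $0$, and therefore $x_n\rightarrow q$ strongly in $K$. Finally, since $q\in F(T)$ the hypothesis $Tp=\{p\}$ yields $Tq=\{q\}$, so taking $u=q\in Tq$ gives $\overrightarrow{uq}=\overrightarrow{qq}$ and, by the definition of the quasilinearization, $\langle \overrightarrow{qq},\overrightarrow{qy}\rangle=\frac{1}{2}[d^2(q,y)+d^2(q,q)-d^2(q,q)-d^2(q,y)]=0\geq 0$ for every $y\in K$; thus $(q,q)$ solves (\ref{1.2}). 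The only delicate point I anticipate is the implication $d(q,Tq)=0\Rightarrow q\in Tq$, which relies on the closedness of the values of $T$; the rest is a routine subsequence-plus-monotonicity argument.
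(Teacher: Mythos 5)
Your proof is correct and follows essentially the same route as the paper: invoke Theorem \ref{teo4.4}, extract a strongly convergent subsequence $\{x_{n_k}\}$ by compactness of $K$, and use the estimate $d(q,Tq)\leq d(q,x_{n_k})+d(x_{n_k},Tx_{n_k})+H(Tx_{n_k},Tq)\leq d(q,x_{n_k})+d(x_{n_k},Tx_{n_k})+d(x_{n_k},q)$ to conclude that the limit $q$ is a fixed point. You are in fact more complete than the paper, whose proof stops once $q\in Tq$ is established; your remaining steps --- using closedness of $Tq\in C(X)$ to pass from $d(q,Tq)=0$ to $q\in Tq$, upgrading subsequential to full-sequence convergence via the existence of $\lim_{n\rightarrow\infty}d(x_{n},q)$ from Theorem \ref{teo4.4}, and verifying that $(q,q)$ solves (\ref{1.2}) through $\langle\overrightarrow{qq},\overrightarrow{qy}\rangle=0$ --- supply exactly what the paper's own argument leaves implicit.
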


\begin{proof}
By Theorem \ref{teo4.4}, we have that $\lim_{n\rightarrow \infty
}d(Tx_{n},x_{n})=0$ and $\lim_{n\rightarrow \infty }d(x_{n},p)$ exists for
all $p\in F(T)$\ Since $K$ is compact there is a convergent subsequence $%
\{x_{n_{i}}\}$ of $\{x_{n}\},$ say $\lim_{i\rightarrow \infty
}x_{_{n_{i}}}=q.$ Then we have

\begin{equation*}
d(q,Tq)\leq
d(q,x_{_{n_{i}}})+d(x_{_{n_{i}}},Tx_{_{n_{i}}})+H(Tx_{_{n_{i}}},Tq)
\end{equation*}%
and taking limit on $i,$continuity of $T$ implies that $\ q\in Tq.$
\end{proof}

\section{Common Solution of System of Variational Inequalities}

Let $X$ be a $CAT(0)$ space and $K_{i}\subset X$ be a nonempty, closed and
convex subsets with $\bigcap\limits_{i=1}^{N}K_{i}\neq \emptyset $. If $%
T_{i}:K_{i}\rightarrow C(X)$ are mappings for $i=1...N$. then the system of
variational inequalities problem is

\begin{equation}
\text{Find }(u_{i},x)_{u_{i}\in T_{i}x}\text{ such that }\langle 
\overrightarrow{u_{i}x},\overrightarrow{xy}\rangle \geq 0\text{ for all }%
y\in K_{i},i=1,...,N  \label{4.1}
\end{equation}

It is obvious that for $N=1$ the problem is reduced the problem (\ref{1.2}).
The importance of studying the problem (\ref{4.1}) is underlying on fact
that it is unification most of the problems; for example taking if we take $%
T_{i}=0$ for all $i=1,...,N$ \ the reduce the problem (\ref{4.1})\ to convex
feasibility problem, 
\begin{equation*}
\text{Find }x\in K=\bigcap\limits_{i=1}^{N}K_{i}
\end{equation*}%
or if every $T_{i}$ is self operator and $K=\bigcap\limits_{i=1}^{N}F(T_{i})$
then it turn to common fixed point problem. We will show that the algorithm
defined by (\ref{4.2}) is convergent to common fixed point of family of
non-self multivalued nonexpansive mappings $\{T_{i}\}_{i=1}^{N}$ which is
also a solution of system of variational inequalities problem (\ref{4.1})
Let $K=\bigcap\limits_{i=1}^{N}K_{i}\neq \emptyset $ and $x_{1}\in K$. then
for any $n\geq 0$, the modified proximal multivalued Picard-S iteration is
defined by 
\begin{eqnarray}
x_{n+1} &=&P_{K}(\bigoplus\limits_{i=1}^{N}\lambda _{n,i}u_{n,i}),  \notag
\\
y_{n} &=&P_{K}(\bigoplus\limits_{i=1}^{N}\alpha _{n,i}w_{n,i}\oplus
\bigoplus\limits_{i=1}^{N}\beta _{n,i}v_{n,i}),  \notag \\
z_{n} &=&P_{K}(\gamma _{n,0}x_{n}\oplus \bigoplus\limits_{i=1}^{N}\gamma
_{n,i}w_{n,i})  \label{4.2}
\end{eqnarray}%
where $u_{n,i}\in T_{i}y_{n},w_{n,i}\in T_{i}x_{n},v_{n,i}\in
T_{i}z_{n},\{\lambda _{n,i}\},\{\alpha _{n,i}\},\{\beta _{n,i}\}$ and $%
\{\gamma _{n,i}\}$ are the sequences satisfies $\sum\limits_{i=1}^{N}%
\lambda _{n,i}=1,\sum\limits_{i=1}^{N}(\alpha _{n,i}+\beta
_{n,i})=1,\sum\limits_{i=0}^{N}\gamma _{n,i}=1$ in $[b,c]$ for some $b,c\in
(0,1)$

\begin{lemma}
\cite{les} Let $(X,d,W)$ be a uniformly convex hyperbolic space with modulus
of uniform convexity $\delta $. For any $r>0,$ $\epsilon \in (0,2),$ $%
\lambda \in \lbrack 0,1]$ and $a,x,y\in $ $X$, if $d(x,a)\leq r$, $%
d(y,a)\leq r$ and $d(x,y)\geq \in r$ then $d((1-\lambda )x\oplus \lambda
y,z)\leq (1-2\lambda (1-\lambda )\delta (r,\in ))r.$
\end{lemma}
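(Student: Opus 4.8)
The assertion is the general-$\lambda$ form of uniform convexity, and I would obtain it from the defining midpoint inequality. Recall that the modulus $\delta$ is defined precisely so that, whenever $d(x,a)\le r$, $d(y,a)\le r$ and $d(x,y)\ge\epsilon r$, the midpoint $m=\frac12 x\oplus\frac12 y$ obeys $d(m,a)\le\bigl(1-\delta(r,\epsilon)\bigr)r$. (The printed conclusion should read $d((1-\lambda)x\oplus\lambda y,a)$, with $a$ replacing the stray $z$ and $\epsilon$ replacing $\in$.) The plan is to combine this midpoint estimate with the convexity inequality $d((1-t)p\oplus tq,a)\le(1-t)d(p,a)+t\,d(q,a)$ recorded in the opening Proposition (the axiom $(W1)$ in a general hyperbolic space).

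First I would reduce to $\lambda\in[0,\tfrac12]$. The quantity $2\lambda(1-\lambda)$ is invariant under $\lambda\mapsto 1-\lambda$, and $(1-\lambda)x\oplus\lambda y=(1-\mu)y\oplus\mu x$ with $\mu=1-\lambda$; hence the case $\lambda\in[\tfrac12,1]$ follows from the case $\lambda\in[0,\tfrac12]$ after interchanging $x$ and $y$ (which leaves the hypotheses $d(x,a)\le r$, $d(y,a)\le r$, $d(x,y)\ge\epsilon r$ unchanged).

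So fix $\lambda\in[0,\tfrac12]$ and set $m=\frac12 x\oplus\frac12 y$. The crucial step is the reparametrization identity
\[
(1-\lambda)x\oplus\lambda y=(1-2\lambda)x\oplus 2\lambda\,m ,
\]
which holds because both points lie on the geodesic from $x$ to $y$ at distance $\lambda\,d(x,y)$ from $x$: the point $m$ is at distance $\tfrac12 d(x,y)$ from $x$, so the point $2\lambda$ of the way along $[x,m]$ sits at distance $2\lambda\cdot\tfrac12 d(x,y)=\lambda\,d(x,y)$ from $x$. Applying convexity to the right-hand side and then the midpoint estimate gives
\[
d\bigl((1-\lambda)x\oplus\lambda y,a\bigr)\le(1-2\lambda)d(x,a)+2\lambda\,d(m,a)\le(1-2\lambda)r+2\lambda\bigl(1-\delta(r,\epsilon)\bigr)r=\bigl(1-2\lambda\,\delta(r,\epsilon)\bigr)r .
\]
Since $1-\lambda\le1$ we have $2\lambda(1-\lambda)\delta(r,\epsilon)\le 2\lambda\,\delta(r,\epsilon)$, so $\bigl(1-2\lambda\,\delta(r,\epsilon)\bigr)r\le\bigl(1-2\lambda(1-\lambda)\delta(r,\epsilon)\bigr)r$, which is the claimed bound.

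The only delicate point is the reparametrization identity. It rests on the fact that a uniformly convex hyperbolic space is uniquely geodesic and that $W(\cdot,\cdot,t)$ parametrizes geodesics proportionally to arclength (axiom $(W2)$), so that $[x,m]$ is the initial portion of $[x,y]$ and the matching of distances from $x$ pins down the coincidence of the two points; equivalently one invokes the standard identity $W(x,W(x,y,\tfrac12),2\lambda)=W(x,y,\lambda)$. Granting this, everything else reduces to the two applications of convexity and the elementary inequality $2\lambda(1-\lambda)\le2\lambda$, with no case analysis beyond the single symmetry reduction.
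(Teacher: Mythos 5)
The paper never proves this lemma --- it is quoted (typos and all: the stray $z$ should be $a$, and ``$\in$'' should be $\epsilon$) from Leustean \cite{les} --- and your argument is essentially the original one: reduce to $\lambda\in[0,\tfrac{1}{2}]$ by the symmetry $\lambda\mapsto 1-\lambda$, rewrite $(1-\lambda)x\oplus\lambda y=(1-2\lambda)x\oplus 2\lambda m$ with $m$ the midpoint, and combine the convexity inequality with the midpoint form of uniform convexity, then use $2\lambda(1-\lambda)\le 2\lambda$. The delicate point you flag is the right one, and your resolution is the standard one: the reparametrization identity is not derivable from the hyperbolic-space axioms alone, but uniform convexity forces unique geodesics (two distinct equidistant points on geodesics from $x$ to $y$ would have a $W$-midpoint strictly closer to both $x$ and $y$, violating the triangle inequality), and in the setting where this paper applies the lemma --- $CAT(0)$ spaces --- unique geodesicity is automatic, so your proof is correct.
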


\begin{proposition}
\cite{lao} Assume that $X$ is a $CAT(0)$ space. Then $X$ is uniformly convex
and
\end{proposition}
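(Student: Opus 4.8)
The statement is truncated in the excerpt; the natural completion, and the one consistent with the preceding lemma on uniformly convex hyperbolic spaces, is that $X$ is uniformly convex with modulus of uniform convexity $\delta(r,\epsilon)=\epsilon^{2}/8$, a quantity independent of $r$. The plan is to read off this estimate directly from the quadratic characterization of $CAT(0)$ spaces recorded earlier in the excerpt, specialized to the midpoint, and then to clean up with one elementary scalar inequality. Because of this, I do not expect any real obstacle: the whole proof is essentially a single application of the $CAT(0)$ inequality.

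First I would fix precisely what must be verified. According to the definition of uniform convexity used in the lemma above, given $r>0$, $\epsilon\in(0,2]$ and points $a,x,y\in X$ with $d(x,a)\le r$, $d(y,a)\le r$ and $d(x,y)\ge\epsilon r$, I must produce $\delta(r,\epsilon)>0$ and bound the midpoint $m=\tfrac12 x\oplus\tfrac12 y$ by $d(a,m)\le(1-\delta(r,\epsilon))r$. So the goal reduces to estimating $d(a,m)$ from above under these three hypotheses.

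The key step is to apply the quadratic $CAT(0)$ inequality from the excerpt with center $a$ and geodesic endpoints $x,y$, at $\lambda=\tfrac12$; here the normalizing constant $R$ must be taken equal to $2$, so that $\tfrac{R}{2}\lambda(1-\lambda)=\tfrac14$ at the midpoint and one recovers the classical Bruhat--Tits inequality
\begin{equation*}
d^{2}(a,m)\le\tfrac12 d^{2}(a,x)+\tfrac12 d^{2}(a,y)-\tfrac14 d^{2}(x,y).
\end{equation*}
Substituting $d(a,x)\le r$, $d(a,y)\le r$ and $d(x,y)\ge\epsilon r$ yields
\begin{equation*}
d^{2}(a,m)\le\tfrac12 r^{2}+\tfrac12 r^{2}-\tfrac14\epsilon^{2}r^{2}=r^{2}\Bigl(1-\tfrac{\epsilon^{2}}{4}\Bigr),
\end{equation*}
and hence $d(a,m)\le r\sqrt{1-\epsilon^{2}/4}$.

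Finally I would invoke the elementary inequality $\sqrt{1-t}\le 1-\tfrac{t}{2}$, valid for $t\in[0,1]$ and immediate upon squaring, with $t=\epsilon^{2}/4$, to conclude $d(a,m)\le r\bigl(1-\tfrac{\epsilon^{2}}{8}\bigr)$. This is exactly uniform convexity with $\delta(r,\epsilon)=\epsilon^{2}/8$, and since $\delta$ does not depend on $r$ the convexity is uniform in the sense required by the lemma. The only points demanding care are the interpretation of the constant $R$ (it must equal $2$ for the estimate to close with the standard quadratic modulus) and the observation that running the identical computation with a general $\lambda\in[0,1]$ reproduces a bound of the form $d((1-\lambda)x\oplus\lambda y,a)\le(1-2\lambda(1-\lambda)\delta(r,\epsilon))r$, matching the normalization of the preceding lemma up to the obvious convention-dependent rescaling of $\delta$.
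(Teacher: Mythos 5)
Your completion of the truncated statement is the right one, and your proof is correct: the paper itself offers no proof of this proposition (it is imported verbatim from the cited reference of Laowang--Panyanak, going back to Leustean), and your argument --- the $CAT(0)$ quadratic inequality with $R=2$ specialized to the midpoint, followed by $\sqrt{1-t}\le 1-\tfrac{t}{2}$ to extract $\delta(r,\epsilon)=\epsilon^{2}/8$ --- is exactly the standard proof given in that literature. Your closing remark about the general-$\lambda$ case is also sound, since the direct computation yields $d((1-\lambda)x\oplus\lambda y,a)\le\bigl(1-2\lambda(1-\lambda)\tfrac{\epsilon^{2}}{4}\bigr)r$, which is stronger than the normalization $\bigl(1-2\lambda(1-\lambda)\delta(r,\epsilon)\bigr)r$ required by the preceding lemma.
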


\begin{equation*}
\delta (r,\in )=\frac{\in ^{2}}{8}
\end{equation*}%
is a modulus of uniform convexity.

\begin{lemma}
\label{lemma7}\cite{dogh} Let $(X,d)$ be a complete $CAT(0)$ space, $%
\{x_{1},x_{2},..x_{n}\}\subset X$\ and $\{\lambda _{1},\lambda
_{2},..,\lambda _{n}\}\subset \lbrack 0,1]$\ with $\sum\limits_{i=1}^{n}%
\lambda _{i}=1.$Then $d(\bigoplus\limits_{i=1}^{n}\lambda _{i}x_{i},z)\leq
\sum\limits_{i=1}^{n}\lambda _{i}d(x_{i},z)$ for every $z\in X.$
\end{lemma}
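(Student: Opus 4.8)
The plan is to prove the inequality by induction on $n$, using the two-point convexity estimate recorded in the opening Proposition (from \cite{dogh2}) as the engine of the inductive step. The only preliminary point to settle is the meaning of the iterated convex combination $\bigoplus_{i=1}^{n}\lambda_{i}x_{i}$ in a $CAT(0)$ space: it is defined recursively by $\bigoplus_{i=1}^{1}\lambda_{i}x_{i}=x_{1}$ and, for $n\geq 2$ with $s_{n-1}:=\sum_{i=1}^{n-1}\lambda_{i}=1-\lambda_{n}$,
\[
\bigoplus_{i=1}^{n}\lambda_{i}x_{i}=(1-\lambda_{n})\Bigl(\bigoplus_{i=1}^{n-1}\tfrac{\lambda_{i}}{s_{n-1}}x_{i}\Bigr)\oplus\lambda_{n}x_{n},
\]
so that the inner combination again has weights summing to one. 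This recursive definition is what lets a two-point estimate propagate to $n$ points.

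For the base case $n=1$ the claim reads $d(x_{1},z)\leq d(x_{1},z)$, which holds with equality. For the inductive step I would assume the inequality for every system of $n-1$ points and set $y:=\bigoplus_{i=1}^{n-1}\tfrac{\lambda_{i}}{s_{n-1}}x_{i}$. Applying the two-point Proposition to the geodesic point $(1-\lambda_{n})y\oplus\lambda_{n}x_{n}$ gives
\[
d\Bigl(\bigoplus_{i=1}^{n}\lambda_{i}x_{i},z\Bigr)\leq(1-\lambda_{n})d(y,z)+\lambda_{n}d(x_{n},z),
\]
and the induction hypothesis applied to $y$ yields $d(y,z)\leq\tfrac{1}{1-\lambda_{n}}\sum_{i=1}^{n-1}\lambda_{i}d(x_{i},z)$. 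Substituting, the factor $(1-\lambda_{n})$ cancels and the two pieces combine to $\sum_{i=1}^{n}\lambda_{i}d(x_{i},z)$, closing the induction.

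The one genuine subtlety, and the step I expect to require care, is the degenerate situation where $s_{n-1}=0$, i.e. $\lambda_{n}=1$ (and more generally where some weights vanish), since then the normalized weights $\lambda_{i}/s_{n-1}$ are undefined and the recursive formula above must be interpreted separately. In that case $\bigoplus_{i=1}^{n}\lambda_{i}x_{i}=x_{n}$ and the claimed inequality again collapses to an equality; more generally one first discards every index with $\lambda_{i}=0$ and runs the induction on the strictly positive weights. Handling these boundary cases cleanly, and confirming that the value of $\bigoplus_{i=1}^{n}\lambda_{i}x_{i}$ is unaffected by such reductions, is the only place where the argument is more than a routine cancellation.
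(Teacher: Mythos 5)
Your proof is correct, and there is nothing to compare it against inside the paper: Lemma \ref{lemma7} is stated there as a quoted result from \cite{dogh} and is not proved in this paper at all. Your argument --- the recursive definition of $\bigoplus_{i=1}^{n}\lambda_{i}x_{i}$, induction on $n$ driven by the two-point convexity inequality $d((1-\lambda)x\oplus\lambda y,z)\leq(1-\lambda)d(x,z)+\lambda d(y,z)$ of the opening Proposition, and the separate treatment of the degenerate case $\lambda_{n}=1$ (and of vanishing weights generally) --- is the standard proof of this fact and is essentially the argument by which it is established in the cited source.
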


\begin{lemma}
\label{lemma8}\cite{lao} Let $X$ be a complete $CAT(0)$ space with modulus
of convexity $\delta (r,\in )$ and let $x\in E$. Suppose that $\delta (r,\in
)$ increases with $r$ (for a fixed $\in $ ) and suppose $\{t_{n}\}$ is a
sequence in $[b,c]$ for some $b,c\in (0,1)$, $\{x_{n}\}$ and $\{y_{n}\}$ are
the sequences in $X$ such that $\limsup\nolimits_{n\rightarrow \infty
}d(x_{n},x)\leq r,$ $\limsup\nolimits_{n\rightarrow \infty }d(y_{n},x)\leq r$
and $\lim\nolimits_{n\rightarrow \infty }d((1-t_{n})x_{n}\oplus
t_{n}y_{n},x)=r$ for some $r\geq 0$. Then $\lim_{n\rightarrow \infty
}d(x_{n},y_{n})=0.$

The following Lemma is very important to our results.
\end{lemma}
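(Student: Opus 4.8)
The plan is to argue by contradiction, after first disposing of the degenerate case $r=0$. If $r=0$, then $\limsup_{n}d(x_{n},x)\le 0$ and $\limsup_{n}d(y_{n},x)\le 0$ force $d(x_{n},x)\to 0$ and $d(y_{n},x)\to 0$, so that $d(x_{n},y_{n})\le d(x_{n},x)+d(x,y_{n})\to 0$ by the triangle inequality and there is nothing more to prove. Hence from now on I assume $r>0$.

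Suppose, for contradiction, that $d(x_{n},y_{n})\not\to 0$. Then there are $\epsilon_{0}>0$ and a subsequence $\{n_{k}\}$ with $d(x_{n_{k}},y_{n_{k}})\ge \epsilon_{0}$ for all $k$. Fix $\eta\in(0,r)$ and put $R=r+\eta$. Since $\limsup_{n}d(x_{n},x)\le r$ and $\limsup_{n}d(y_{n},x)\le r$, for all large $k$ we have $d(x_{n_{k}},x)\le R$ and $d(y_{n_{k}},x)\le R$; moreover $d(x_{n_{k}},y_{n_{k}})\ge \epsilon_{0}=(\epsilon_{0}/R)\,R$, so with $\epsilon:=\epsilon_{0}/R\in(0,2)$ (the upper bound coming from $\epsilon_{0}\le d(x_{n_{k}},x)+d(x,y_{n_{k}})\le 2R$) the hypotheses of the uniform convexity estimate of \cite{les} hold with center $a=x$ and parameter $\lambda=t_{n_{k}}$. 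That estimate then places the point $(1-t_{n_{k}})x_{n_{k}}\oplus t_{n_{k}}y_{n_{k}}$ within distance $\bigl(1-2t_{n_{k}}(1-t_{n_{k}})\,\delta(R,\epsilon)\bigr)R$ of $x$.

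Because $\{t_{n}\}\subset[b,c]\subset(0,1)$, the factor $t_{n_{k}}(1-t_{n_{k}})$ is bounded below by $m:=\min\{b(1-b),c(1-c)\}>0$. Letting $k\to\infty$ and invoking the hypothesis $\lim_{n}d((1-t_{n})x_{n}\oplus t_{n}y_{n},x)=r$ therefore yields $r\le\bigl(1-2m\,\delta(R,\epsilon_{0}/R)\bigr)R$, which rearranges to $2m\,\delta(R,\epsilon_{0}/R)\,R\le R-r=\eta$. Since $X$ is uniformly convex and $\epsilon_{0}/R>0$, the quantity $\delta(R,\epsilon_{0}/R)$ is strictly positive, so the left-hand side is a genuinely positive number.

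The final step is to show this positive quantity cannot be made $\le\eta$ for all small $\eta$. In the general setting this is where the monotonicity of $\delta$ in its first variable is used, but the cleanest route here is to insert the explicit $CAT(0)$ modulus $\delta(r,\epsilon)=\epsilon^{2}/8$ supplied by \cite{lao}: then $2m\,\delta(R,\epsilon_{0}/R)\,R=2m\cdot\frac{\epsilon_{0}^{2}}{8R^{2}}\cdot R=\frac{m\epsilon_{0}^{2}}{4R}>\frac{m\epsilon_{0}^{2}}{8r}$, using $R=r+\eta<2r$. Thus $\frac{m\epsilon_{0}^{2}}{8r}<\eta$ for every $\eta\in(0,r)$, which is absurd once $\eta$ is chosen below the fixed positive number $\frac{m\epsilon_{0}^{2}}{8r}$. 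This contradiction proves $\lim_{n}d(x_{n},y_{n})=0$. I expect this passage to the limit to be the only real obstacle: one must guarantee that shrinking $\eta$ does not let $\delta(R,\epsilon_{0}/R)$ degenerate to $0$ as the second argument $\epsilon_{0}/R$ drifts, and this is exactly what the monotonicity hypothesis on $\delta$ (or, concretely, the closed form $\epsilon^{2}/8$) secures; every other step is a direct substitution into the uniform convexity inequality.
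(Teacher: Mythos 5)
Your argument is correct, but there is nothing in the paper to compare it against: Lemma \ref{lemma8} is imported verbatim from \cite{lao}, and the paper gives no proof of it. Judged on its own terms, your reconstruction is sound: the $r=0$ case is trivial, and for $r>0$ the contradiction scheme --- Leustean's estimate from \cite{les} with center $x$, radius $R=r+\eta$, $\epsilon=\epsilon_{0}/R$, the lower bound $t_{n_{k}}(1-t_{n_{k}})\geq m=\min \{b(1-b),c(1-c)\}$, the limit inequality $2m\,\delta (R,\epsilon_{0}/R)\,R\leq \eta$, and then the substitution $\delta =\epsilon ^{2}/8$ to exhibit the fixed positive bound $m\epsilon _{0}^{2}/(8r)<\eta$ --- is watertight. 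Two remarks. First, your parenthetical justification only yields $\epsilon _{0}/R\leq 2$; the strict inequality needed for \cite{les} follows from $\epsilon _{0}\leq \limsup_{k}d(x_{n_{k}},x)+\limsup_{k}d(y_{n_{k}},x)\leq 2r<2R$, since $\epsilon _{0}$ is fixed before $\eta$ is chosen. Second, specializing to the canonical modulus $\epsilon ^{2}/8$ is legitimate here, because the conclusion makes no reference to $\delta$ and the ambient hypothesis is that $X$ is $CAT(0)$; the abstract monotonicity assumption is then never needed (it only matters if one wants the lemma for general uniformly convex hyperbolic spaces, where, as you note, one must also keep $\delta (R,\epsilon _{0}/R)$ from degenerating as $R\downarrow r$, which requires monotonicity in the second argument as well). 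It is worth knowing that in the $CAT(0)$ setting the lemma also has a direct, contradiction-free proof from the quadratic inequality stated in the paper's introduction (with constant $R=2$): rearranging it with $\lambda =t_{n}$ and center $x$ gives
\begin{equation*}
t_{n}(1-t_{n})\,d^{2}(x_{n},y_{n})\leq (1-t_{n})\,d^{2}(x_{n},x)+t_{n}\,d^{2}(y_{n},x)-d^{2}\bigl((1-t_{n})x_{n}\oplus t_{n}y_{n},x\bigr),
\end{equation*}
and the right-hand side has $\limsup$ at most $r^{2}-r^{2}=0$, so $d(x_{n},y_{n})\rightarrow 0$ because $t_{n}(1-t_{n})\geq m>0$. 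Your route buys generality beyond $CAT(0)$; this one is shorter and uses no modulus at all.
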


\begin{lemma}
\label{lemma9}Let $X$ be a complete $CAT(0)$ space with modulus of convexity 
$\delta (r,\in )$ and let $x\in X$. Suppose that $\delta (r,\in )$ increases
with $r$ (for a fixed $\in $ ) and suppose $\{t_{n,i}\}$ with $%
\sum\limits_{i=1}^{N}t_{n,i}=1$ is a sequence in $[b,c]$ for some $b,c\in
(0,1)$, $\{x_{n,i}\}_{n=1}^{\infty }$ are the sequences for $i\in
\{1,2,..,N\}$ in $X$ such that $\limsup\nolimits_{n\rightarrow \infty
}d(x_{n,i},x)\leq r$ and $\lim\nolimits_{n\rightarrow \infty
}d(\bigoplus\limits_{i=1}^{N}t_{n,i}x_{n,i},x)=r$ for some $r\geq 0$. Then $%
\lim\nolimits_{n\rightarrow \infty }d(x_{n,k},x_{n,l})=0$ for $k,l\in
\{1,2,..,N\}.$
\end{lemma}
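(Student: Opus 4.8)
The plan is to argue by induction on $N$, peeling off one point from the $N$-fold barycenter so that the two-variable result Lemma \ref{lemma8} can be applied. I take as the base case $N=2$: writing $t_n:=t_{n,2}$ so that $t_{n,1}=1-t_n$, the hypotheses become precisely those of Lemma \ref{lemma8} with $x_n:=x_{n,1}$, $y_n:=x_{n,2}$ and $t_n\in[b,c]$, whose conclusion is $d(x_{n,1},x_{n,2})\to 0$. Throughout I use the usual convention that $\bigoplus_{i=1}^{N}\lambda_i x_i$ is built by splitting off the last point, i.e. $\bigoplus_{i=1}^{N}\lambda_i x_i=(1-\lambda_N)\bigl(\bigoplus_{i=1}^{N-1}\tfrac{\lambda_i}{1-\lambda_N}x_i\bigr)\oplus\lambda_N x_N$.

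For the inductive step, assuming the statement for $N-1$ sequences (for any weight bounds in $(0,1)$), I fix $N\ge 3$ sequences and set $s_n:=\sum_{i=1}^{N-1}t_{n,i}=1-t_{n,N}$, $Q_n:=\bigoplus_{i=1}^{N-1}\tfrac{t_{n,i}}{s_n}x_{n,i}$, and $P_n:=\bigoplus_{i=1}^{N}t_{n,i}x_{n,i}=(1-t_{n,N})Q_n\oplus t_{n,N}x_{n,N}$. First I would bound the partial barycenter: by Lemma \ref{lemma7}, $d(Q_n,x)\le\sum_{i=1}^{N-1}\tfrac{t_{n,i}}{s_n}d(x_{n,i},x)$, and since these weights sum to $1$ and each $d(x_{n,i},x)$ has $\limsup\le r$, I get $\limsup_n d(Q_n,x)\le r$. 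Combined with $\limsup_n d(x_{n,N},x)\le r$, $\lim_n d(P_n,x)=r$, and $t_{n,N}\in[b,c]$, Lemma \ref{lemma8} applied to $Q_n,\,x_{n,N}$ gives $\lim_n d(Q_n,x_{n,N})=0$.

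Next I feed this back into $Q_n$. Since $P_n=(1-t_{n,N})Q_n\oplus t_{n,N}x_{n,N}$ lies on the geodesic from $Q_n$ to $x_{n,N}$, one has $d(P_n,Q_n)=t_{n,N}\,d(Q_n,x_{n,N})\to 0$, whence $\lim_n d(Q_n,x)=\lim_n d(P_n,x)=r$. I then check that the renormalized weights $\tfrac{t_{n,i}}{s_n}$ sum to $1$ and lie in a fixed subinterval of $(0,1)$: the lower bound $\tfrac{b}{1-b}$ follows from $s_n\le 1-b$, and the upper bound $\tfrac{c}{c+(N-2)b}<1$ follows from $s_n\ge t_{n,i}+(N-2)b$, which is strictly less than $1$ exactly because $N\ge 3$. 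The induction hypothesis then yields $d(x_{n,k},x_{n,l})\to 0$ for all $k,l\le N-1$. Finally, $d(Q_n,x_{n,k})\le\sum_{i=1}^{N-1}\tfrac{t_{n,i}}{s_n}d(x_{n,i},x_{n,k})\to 0$ by Lemma \ref{lemma7}, so for each $k\le N-1$, $d(x_{n,k},x_{n,N})\le d(x_{n,k},Q_n)+d(Q_n,x_{n,N})\to 0$; together with the previous step this covers every pair and closes the induction.

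I expect the main obstacle to be the bookkeeping that keeps the renormalized weights $\tfrac{t_{n,i}}{s_n}$ bounded away from both $0$ and $1$, since this is what legitimizes invoking the induction hypothesis; it is precisely here that $N\ge 3$ in the inductive step is essential (the case $N=2$ being dispatched directly by Lemma \ref{lemma8}). The only other delicate point is upgrading $\limsup_n d(Q_n,x)\le r$ to the genuine limit $\lim_n d(Q_n,x)=r$, which I obtain cleanly from $d(P_n,Q_n)\to 0$ after Lemma \ref{lemma8} has been applied, rather than through a separate subsequence argument.
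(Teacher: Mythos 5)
Your proof is correct, and it is built from the same two tools as the paper's own proof --- Lemma \ref{lemma7} to control the renormalized partial barycenter and Lemma \ref{lemma8} applied to a pair consisting of a partial barycenter and the point peeled off from it --- but the logical architecture is genuinely different. The paper argues by contradiction and without induction: setting $Q_n^{(m)}:=\bigoplus_{i\neq m}\frac{t_{n,i}}{1-t_{n,m}}x_{n,i}$, it uses the bound $d(x_{n,k},x_{n,l})\le(1-t_{n,k})\,d(Q_n^{(k)},x_{n,k})+(1-t_{n,l})\,d(Q_n^{(l)},x_{n,l})$, so that if $d(x_{n,k},x_{n,l})\nrightarrow0$ then along a subsequence one of these terms, say $d(Q_n^{(k)},x_{n,k})$, stays bounded away from zero, and a single application of Lemma \ref{lemma8} to the pair $(Q_n^{(k)},x_{n,k})$ yields the contradiction. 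That route disposes of every pair $(k,l)$ with one use of Lemma \ref{lemma8} and never needs the renormalized weights to be bounded away from $0$ and $1$, since they enter only through Lemma \ref{lemma7}; on the other hand, it tacitly decomposes $\bigoplus_{i=1}^{N}t_{n,i}x_{n,i}$ by peeling off an \emph{arbitrary} index $m$, a permutation-invariance that the iterative definition of $\bigoplus$ does not literally supply. Your induction peels only the \emph{last} coordinate, which is exactly how $\bigoplus$ is defined, so it avoids that subtlety; the price is precisely the bookkeeping you identify and carry out correctly: upgrading $\limsup_n d(Q_n,x)\le r$ to $\lim_n d(Q_n,x)=r$ via $d(P_n,Q_n)\le d(Q_n,x_{n,N})\to0$, the weight bounds $b\le t_{n,i}/s_n\le c/(c+(N-2)b)<1$ for $N\ge3$, quantifying the inductive hypothesis over all $b,c\in(0,1)$, and $N-1$ applications of Lemma \ref{lemma8} instead of one. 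Both arguments are sound; yours is longer but direct and rests only on the definitional decomposition of $\bigoplus$, while the paper's is shorter but indirect and glosses over the order-dependence of the iterated convex combination.
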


\begin{proof}
If $r=0$ then it is obvious let $r>0.$Since $\limsup\nolimits_{n\rightarrow
\infty }d(x_{n,i},x)\leq r$ for each $i=1,2,..N$, then, by Lemma \ref{lemma7}%
, for every $m=1,2,..,N$,%
\begin{eqnarray*}
\lim_{n\rightarrow \infty }d(\bigoplus\limits_{\substack{ i=1,  \\ i\neq m}}%
^{N}\frac{t_{n,i}}{1-t_{n,m}}x_{n,i},x) &\leq &\lim_{n\rightarrow \infty
}\sum\limits_{\substack{ i=1,  \\ i\neq m}}^{n}\frac{t_{n,i}}{1-t_{n,m}}%
d(x_{n,i},x) \\
&\leq &\lim_{n\rightarrow \infty }\sum\limits_{\substack{ i=1,  \\ i\neq m}}%
^{n}\frac{t_{n,i}}{1-t_{n,m}}(\limsup\nolimits_{n\rightarrow \infty
}d(x_{n,i},x)) \\
&\leq &\lim_{n\rightarrow \infty }\sum\limits_{\substack{ i=1,  \\ i\neq m}}%
^{n}\frac{t_{n,i}}{1-t_{n,m}}r=r.
\end{eqnarray*}%
Let assume that $d(x_{n,k},x_{n,l})\nrightarrow 0$ for fixed $k,l\in
\{1,2,..,N\}$ with $k\neq l$ \ then there is subsequence denoted by (without
loss of generality) $\{x_{n,k}\}$ and $\{x_{n,l}\}$ such that $%
inf_{n}d(x_{n,k},x_{n,l})>0.$Since 
\begin{eqnarray*}
d(\bigoplus\limits_{i=1}^{N}t_{n,i}x_{n,i},x_{n,m})
&=&d((1-t_{n,m})[\bigoplus\limits_{\substack{ i=1  \\ i\neq m}}^{N}\frac{%
t_{n,i}}{1-t_{n,m}}x_{n,i}]\oplus t_{n,m}x_{n,m},x_{n,m}) \\
&\leq &(1-t_{n,m})[d(\bigoplus\limits_{\substack{ i=1,  \\ i\neq m}}^{N}%
\frac{t_{n,i}}{1-t_{n,m}}x_{n,i},x_{n,m})+t_{n,m}d(x_{n,m},x_{n,m}) \\
&=&(1-t_{n,m})[d(\bigoplus\limits_{\substack{ i=1,  \\ i\neq m}}^{N}\frac{%
t_{n,i}}{1-t_{n,m}}x_{n,i},x_{n,m})
\end{eqnarray*}%
then 
\begin{eqnarray*}
0 &<&d(x_{n,k},x_{n,l}) \\
&\leq
&d(\bigoplus\limits_{i=1}^{N}t_{n,i}x_{n,i},x_{n,k})+d(\bigoplus%
\limits_{i=1}^{N}t_{n,i}x_{i},x_{n,l}) \\
&\leq &(1-t_{n,k})d(\bigoplus\limits_{\substack{ i=1,  \\ i\neq k}}^{N}%
\frac{t_{n,i}}{1-t_{n,k}}x_{n,i},x_{n,k})+(1-t_{n,l})d(\bigoplus\limits 
_{\substack{ i=1,  \\ i\neq l}}^{N}\frac{t_{n,i}}{1-t_{n,l}}x_{n,i},x_{n,l})
\end{eqnarray*}%
and since $t_{n,k},t_{n,l}\in \lbrack b,c]$ and by positivity of $d$, $%
d(\bigoplus\limits_{i=1,i\neq k}^{N}\frac{t_{n,i}}{1-t_{n,k}}%
x_{n,i},x_{n,k})\nrightarrow 0.$ therefore there is subsequence again
denoted by $\{x_{n,k}\}$ for some $k=1,2,..N$ such that $d(\bigoplus%
\limits_{i=1}^{N}\frac{t_{n,i}}{1-t_{n,k}}x_{n,i},x_{n,k})>0$ so $%
d(x_{n,k},x)\leq r,d(\bigoplus\limits_{i=1,i\neq k}^{N}\frac{t_{n,i}}{%
1-t_{n,k}}x_{n,k},x)\leq r$ and $\lim\nolimits_{n\rightarrow \infty
}d(\bigoplus\limits_{i=1}^{N}t_{n,i}x_{n,i},x)=\lim\nolimits_{n\rightarrow
\infty }d((1-t_{n,m})[\bigoplus\limits_{i=1,i\neq k}^{N}\frac{t_{n,i}}{%
1-t_{n,k}}x_{n,i}]\oplus t_{n,m}x_{n,m},x)=r$ hence we can apply Lemma \ref%
{lemma8}.
\end{proof}

From this point, it is assumed that $X$ is a complete $CAT(0)$ and $%
K=\bigcap\limits_{i=1}^{N}K_{i}$ is a nonempty, closed and convex subset of $%
X$ where $K_{i}\subset X$ be a nonempty, closed and convex subsets with $%
K=\bigcap\limits_{i=1}^{N}K_{i}\neq \emptyset $ for all $i=1,2,...,N$.

\begin{lemma}
\label{lemma10}Let $\{T_{i}\}_{i=1}^{N}$ be multivalued nonexpansive
mappings \ from $K$ to $CC(X)$ with $F=\bigcap\limits_{i=1}^{N}F(T_{i})\neq
\emptyset ,$ $T_{i}p=\{p\}$ for all $p\in F$. If\ $\{x_{n}\}$ is the
sequence defined by (\ref{4.2}) then $\{x_{n}\}$ is bounded and $%
\lim_{n\rightarrow \infty }d(x_{n},p)$ exist for all $p\in F$.
\end{lemma}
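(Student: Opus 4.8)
The plan is to fix an arbitrary $p\in F$ and to establish the single key inequality $d(x_{n+1},p)\leq d(x_{n},p)$, from which both conclusions follow at once: a non-increasing sequence of nonnegative reals converges, giving existence of $\lim_{n\to\infty}d(x_{n},p)$, while monotonicity yields $d(x_{n},p)\leq d(x_{1},p)$ for all $n$, so $\{x_{n}\}$ is bounded. This is the multivalued, $N$-map analogue of the telescoping estimate in Theorem \ref{teo4.4}; the only structural change is that the single binary convex combination there is replaced by finite barycenters, which must be controlled through Lemma \ref{lemma7}.

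First I would record the basic consequence of the hypotheses $p\in F(T_{i})$ and $T_{i}p=\{p\}$ together with nonexpansiveness of $T_{i}$: for every $x\in K$ and every selection $w\in T_{i}x$ one has
\[
d(w,p)=d(w,T_{i}p)\leq H(T_{i}x,T_{i}p)\leq d(x,p).
\]
Applied to the selections appearing in \eqref{4.2}, this gives $d(w_{n,i},p)\leq d(x_{n},p)$, $d(v_{n,i},p)\leq d(z_{n},p)$, and $d(u_{n,i},p)\leq d(y_{n},p)$ for each $i$.

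Next I would estimate the three lines of \eqref{4.2} in order, using that $P_{K}$ is nonexpansive with $P_{K}(p)=\{p\}$ and applying the barycenter inequality of Lemma \ref{lemma7} to each convex combination. Since $\gamma_{n,0}+\sum_{i=1}^{N}\gamma_{n,i}=1$,
\[
d(z_{n},p)\leq \gamma_{n,0}d(x_{n},p)+\sum_{i=1}^{N}\gamma_{n,i}d(w_{n,i},p)\leq d(x_{n},p).
\]
Feeding this into the bound $d(v_{n,i},p)\leq d(z_{n},p)\leq d(x_{n},p)$ and using $\sum_{i=1}^{N}(\alpha_{n,i}+\beta_{n,i})=1$,
\[
d(y_{n},p)\leq \sum_{i=1}^{N}\alpha_{n,i}d(w_{n,i},p)+\sum_{i=1}^{N}\beta_{n,i}d(v_{n,i},p)\leq d(x_{n},p).
\]
Finally, since $\sum_{i=1}^{N}\lambda_{n,i}=1$ and $d(u_{n,i},p)\leq d(y_{n},p)\leq d(x_{n},p)$,
\[
d(x_{n+1},p)\leq \sum_{i=1}^{N}\lambda_{n,i}d(u_{n,i},p)\leq d(x_{n},p).
\]

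I do not anticipate a genuine obstacle: the proof is a routine chaining of the barycenter inequality through the three auxiliary sequences $z_{n},y_{n},x_{n+1}$, and the prescribed coefficient constraints are exactly what makes each weighted sum collapse to $d(x_{n},p)$. The only points requiring care are (i) that each selection may legitimately be compared to $p$ via $d(w,p)=d(w,T_{i}p)$, which is immediate from $T_{i}p=\{p\}$, and (ii) that Lemma \ref{lemma7} is applied to genuine convex combinations with nonnegative weights summing to one, which holds by the stated conditions on $\{\lambda_{n,i}\}$, $\{\alpha_{n,i}\}$, $\{\beta_{n,i}\}$, $\{\gamma_{n,i}\}$. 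Once $d(x_{n+1},p)\leq d(x_{n},p)$ is established, monotone convergence of $\{d(x_{n},p)\}$ and the uniform bound $d(x_{n},p)\leq d(x_{1},p)$ finish both assertions.
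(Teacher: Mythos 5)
Your proposal is correct and follows essentially the same route as the paper: nonexpansiveness of $P_{K}$ (with $P_{K}(p)=p$), the barycenter inequality of Lemma \ref{lemma7}, and the selection bound $d(w,p)=d(w,T_{i}p)\leq H(T_{i}x,T_{i}p)\leq d(x,p)$ coming from $T_{i}p=\{p\}$, chained through $z_{n}$, $y_{n}$, $x_{n+1}$ to get $d(x_{n+1},p)\leq d(y_{n},p)\leq d(x_{n},p)$. The only difference is cosmetic ordering (you build the estimates bottom-up, the paper top-down), so nothing further is needed.
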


\begin{proof}
Let $p\in F.$Then from definition of $\{x_{n}\},$%
\begin{eqnarray*}
d(x_{n+1},p) &=&d(P_{K}(\bigoplus\limits_{i=1}^{N}\lambda _{n,i}u_{n,i}),p)
\\
&\leq &\sum\limits_{i=1}^{N}\lambda _{n,i}d(u_{n,i,}p) \\
&\leq &\sum\limits_{i=1}^{N}\lambda _{n,i}d(u_{n,i,}T_{i}p) \\
&\leq &\sum\limits_{i=1}^{N}\lambda _{n,i}H(T_{i}y_{n},T_{i}p) \\
&\leq &\sum\limits_{i=1}^{N}\lambda _{n,i}d(y_{n},p) \\
&=&d(y_{n},p)
\end{eqnarray*}%
and%
\begin{eqnarray*}
d(y_{n},p) &=&d(\bigoplus\limits_{i=1}^{N}\alpha _{n,i}w_{n,i}\oplus
\bigoplus\limits_{i=1}^{N}\beta _{n,i}v_{n,i},,p) \\
&\leq &\sum\limits_{i=1}^{N}\alpha
_{n,i}d(w_{n,i},p)+\sum\limits_{i=1}^{N}\beta _{n,i}d(v_{n,i},p) \\
&\leq &\sum\limits_{i=1}^{N}\alpha
_{n,i}d(w_{n,i},T_{i}p)+\sum\limits_{i=1}^{N}\beta _{n,i}d(v_{n,i},T_{i}p)
\\
&\leq &\sum\limits_{i=1}^{N}\alpha
_{n,i}H(T_{i}x_{n},T_{i}p)+\sum\limits_{i=1}^{N}\beta
_{n,i}H(T_{i}z_{n},T_{i}p) \\
&\leq &\sum\limits_{i=1}^{N}\alpha
_{n,i}d(x_{n},p)+\sum\limits_{i=1}^{N}\beta _{n,i}d(z_{n},p)
\end{eqnarray*}%
and%
\begin{eqnarray*}
d(z_{n},p) &=&d(\gamma _{n,0}x_{n}\oplus \bigoplus\limits_{i=1}^{N}\gamma
_{n,i}w_{n,i},p) \\
&\leq &\gamma _{n,0}d(x_{n},p)+\sum\limits_{i=1}^{N}\gamma
_{n,i}d(w_{n,i},p) \\
&\leq &\gamma _{n,0}d(x_{n},p)+\sum\limits_{i=1}^{N}\gamma
_{n,i}d(w_{n,i},T_{i}p) \\
&\leq &\gamma _{n,0}d(x_{n},p)+\sum\limits_{i=1}^{N}\gamma
_{n,i}H(T_{i}x_{n},T_{i}p) \\
&\leq &\gamma _{n,0}d(x_{n},p)+\sum\limits_{i=1}^{N}\gamma _{n,i}d(x_{n},p)
\\
&=&d(x_{n},p).
\end{eqnarray*}%
\qquad Hence $d(y_{n},p)\leq $ $d(x_{n},p),d(z_{n},p)\leq d(x_{n},p)$ and $%
d(x_{n+1},p)\leq d(x_{n},p)$ and so $\lim_{n\rightarrow \infty }d(x_{n},p)$
exists and $\{x_{n}\}$ is bounded sequence.
\end{proof}

\begin{lemma}
\label{lemma11}Let $\{T_{i}\}_{i=1}^{N}$ be multivalued nonexpansive
mappings from $K$ to $C(X)$ with $F=\bigcap\limits_{i=1}^{N}F(T_{i})\neq
\emptyset ,$ $T_{i}p=\{p\}$ for all $p\in F$. If\ $\{x_{n}\}$ is the
sequence defined by (\ref{4.2}) then $\lim_{n\rightarrow \infty
}d(x_{n},T_{i}x_{n})$ exist for all $i=1,2..N$.

\begin{proof}
Let $p\in F.$From the Lemma \ref{lemma10} $\lim_{n\rightarrow \infty
}d(x_{n},p)$ exist and $\{x_{n}\}$ is bounded sequence. so let $%
\lim_{n\rightarrow \infty }d(x_{n},p)=c$. Since $d(y_{n},p)\leq $ $%
d(x_{n},p) $ and $d(u_{n,i},p)\leq d(y_{n},p),\limsup_{n\rightarrow \infty
}d(y_{n},p)\leq c$ and $\limsup_{n\rightarrow \infty }d(u_{n,i},p)\leq c$
and again from Lemma \ref{lemma10} similarly $\limsup_{n\rightarrow \infty
}d(z_{n},p)\leq c$ and $\limsup_{n\rightarrow \infty }d(v_{n,i},p)\leq c$
and $\limsup_{n\rightarrow \infty }d(x_{n},p)\leq c$ and $%
\limsup_{n\rightarrow \infty }d(w_{n,i},p)\leq c.$Moreover we have%
\begin{eqnarray*}
c &=&\lim_{n\rightarrow \infty }d(x_{n+1},p) \\
&=&\lim_{n\rightarrow \infty }d(\bigoplus\limits_{i=1}^{N}\lambda
_{n,i}u_{n,i},p)\text{ } \\
\text{ } &\leq &\lim_{n\rightarrow \infty }\sum\limits_{i=1}^{N}\lambda
_{n,i}d(u_{n,i},p) \\
&\leq &\lim_{n\rightarrow \infty }\sum\limits_{i=1}^{N}\lambda
_{n,i}\limsup_{n\rightarrow \infty }d(u_{n,i},p) \\
&\leq &\lim_{n\rightarrow \infty }\sum\limits_{i=1}^{N}\lambda _{n,i}c\leq c
\end{eqnarray*}%
implies $\lim_{n\rightarrow \infty }d(\bigoplus\nolimits_{i=1}^{N}\lambda
_{n,i}u_{n,i},p)=c$. We find that $\lim_{n\rightarrow \infty
}d(u_{n,i},u_{n,j})=0$ for all $i,j=1,2,..,N$ by Lemma \ref{lemma9}. Then 
\begin{eqnarray*}
d(x_{n+1},p) &=&d(\bigoplus\limits_{i=1}^{N}\lambda _{n,i}u_{n,i,}p) \\
&\leq &\sum\limits_{i=1}^{N}\lambda _{n,i}d(u_{n,i,}p) \\
&\leq &\sum\limits_{i=1}^{N}\lambda _{n,i}d(u_{n,i,}p) \\
&\leq &\sum\limits_{i=1}^{N}\lambda _{n,i}d(u_{n,i,}p) \\
&\leq &\sum\limits_{i=1}^{N}\lambda
_{n,i}[(d(u_{n,i,}u_{n,m})+d(u_{n,m},p))] \\
&\leq &d(u_{n,m},p)+\sum\limits_{i=1}^{N}\lambda _{n,i}d(u_{n,i,}u_{n,m})
\end{eqnarray*}%
and then $\liminf_{n\rightarrow \infty }d(u_{n,m},p)\geq c$ for all $%
m=1,2,..,N$. Since $\limsup_{n\rightarrow \infty }d(u_{n,i,},p)\leq c$ and $%
d(u_{n,i,}p)\leq d(y_{n},p)$ thus we have $\lim_{n\rightarrow \infty
}d(u_{n,i},p)=c$ and $\lim_{n\rightarrow \infty }d(y_{n},p)$=$c$.%
\begin{eqnarray*}
c &=&\lim_{n\rightarrow \infty }d(y_{n},p) \\
&=&\lim_{n\rightarrow \infty }d(\bigoplus\limits_{i=1}^{N}\alpha
_{n,i}w_{n,i}\oplus \bigoplus\limits_{i=1}^{N}\beta _{n,i}v_{n,i},p) \\
&\leq &\lim_{n\rightarrow \infty }[\sum\limits_{i=1}^{N}\alpha
_{n,i}\limsup_{n\rightarrow \infty }d(x_{n},p)+\sum\limits_{i=1}^{N}\beta
_{n,i}\limsup_{n\rightarrow \infty }d(w_{n,i},p)] \\
&\leq &\lim_{n\rightarrow \infty }[\sum\limits_{i=1}^{N}\alpha
_{n,i}c+\sum\limits_{i=1}^{N}\beta _{n,i}c]\leq c
\end{eqnarray*}%
which implies that $\lim_{n\rightarrow \infty
}d(\bigoplus\nolimits_{i=1}^{N}\alpha _{n,i}w_{n,i}\oplus
\bigoplus\nolimits_{i=1}^{N}\beta _{n,i}v_{n,i},p)=c$. Also we have $%
\lim_{n\rightarrow \infty }d(v_{n,i},v_{n,j})=0$, $\lim_{n\rightarrow \infty
}d(v_{n,i},w_{n,j})=0$ and $\lim_{n\rightarrow \infty }d(w_{n,i},w_{n,j})=0$
for all $i,j=1,...,N$ by Lemma \ref{lemma11}. Then%
\begin{eqnarray*}
d(y_{n},p) &=&d(\bigoplus\limits_{i=1}^{N}\alpha _{n,i}w_{n,i}\oplus
\bigoplus\limits_{i=1}^{N}\beta _{n,i}v_{n,i},p) \\
&\leq &\sum\limits_{i=1}^{N}\alpha
_{n,i}d(w_{n,i},p)+\sum\limits_{i=1}^{N}\beta _{n,i}d(v_{n,i},p) \\
&\leq &\sum\limits_{i=1}^{N}\alpha
_{n,i}[d(w_{n,i},v_{n,m})+d(v_{n,m},p)]+\sum\limits_{i=1}^{N}\beta
_{n,i}d(v_{n,i},p) \\
&\leq &\sum\limits_{i=1}^{N}\alpha
_{n,i}d(w_{n,i},v_{n,m})+(1-\sum\limits_{i=1}^{N}\beta
_{n,i})d(v_{n,m},p)+\sum\limits_{i=1}^{N}\beta _{n,i}d(v_{n,i},p) \\
&=&\sum\limits_{i=1}^{N}\alpha
_{n,i}d(w_{n,i},v_{n,m})+d(v_{n,m},p)+\sum\limits_{i=1}^{N}\beta
_{n,i}[d(v_{n,i},p)-d(v_{n,m},p)] \\
&\leq &\sum\limits_{i=1}^{N}\alpha _{n,i}d(w_{n,i},v_{n,m})+d(v_{n,m},p) \\
&&+\sum\limits_{i=1}^{N}\beta
_{n,i}[d(v_{n,i},v_{n,m})+(d(v_{n,m},p))-d(v_{n,m},p)] \\
&\leq &\sum\limits_{i=1}^{N}\alpha
_{n,i}d(w_{n,i},v_{n,m})+d(v_{n,m},p)+\sum\limits_{i=1}^{N}\beta
_{n,i}d(v_{n,i},v_{n,m})
\end{eqnarray*}%
and since $\lim_{n\rightarrow \infty }d(v_{n,i},w_{n,j})=0$ and $%
\lim_{n\rightarrow \infty }d(w_{n,i},w_{n,j})=0$ for all $i,j=1,...,N$ then $%
\liminf_{n\rightarrow \infty }d(v_{n,m},p)\geq c$ for all $m=1,2,..,N$ and
since $\limsup_{n\rightarrow \infty }d(v_{n,i,},p)\leq c$ and $%
d(v_{n,i,}p)\leq d(z_{n},p)$ thus we have $\lim_{n\rightarrow \infty
}d(v_{n,i},p)=c.$and $\lim_{n\rightarrow \infty }d(z_{n},p)$=$c$. Finally 
\begin{eqnarray*}
c &=&\lim_{n\rightarrow \infty }d(z_{n},p) \\
&=&\lim_{n\rightarrow \infty }[d(\gamma _{n,0}x_{n}\oplus
\bigoplus\limits_{i=1}^{N}\gamma _{n,i}w_{n,i},p) \\
&\leq &\lim_{n\rightarrow \infty }[\gamma _{n,0}\limsup_{n\rightarrow \infty
}d(x_{n},p)+\sum\limits_{i=1}^{N}\gamma _{n,i}\limsup_{n\rightarrow \infty
}d(w_{n,i},p)] \\
&\leq &\lim_{n\rightarrow \infty }[\gamma
_{n,0}c+\sum\limits_{i=1}^{N}\gamma _{n,i}c]\leq c
\end{eqnarray*}%
which implies that $\lim_{n\rightarrow \infty }[d(\gamma _{n,0}x_{n}\oplus
\bigoplus\nolimits_{i=1}^{N}\gamma _{n,i}w_{n,i},p)=c$ and since $%
\limsup_{n\rightarrow \infty }d(x_{n},p)\leq c$ and $\limsup_{n\rightarrow
\infty }d(w_{n,i},p)\leq c$ we find that $\lim_{n\rightarrow \infty
}d(x_{n},w_{n,i})=0$ and $\lim_{n\rightarrow \infty }d(w_{n,i},w_{n,j})=0$
for all $i,j$ by Lemma \ref{lemma9}..Hence $d(x_{n},T_{i}x_{n})\leq
d(x_{n},w_{n,i})$ for all $i=1,2..N$ and $\lim_{n\rightarrow \infty
}d(x_{n},T_{i}x_{n})$ $=0$
\end{proof}
\end{lemma}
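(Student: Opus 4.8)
The plan is to fix $p\in F$ and put $c=\lim_{n\to\infty}d(x_n,p)$, which exists and is finite by Lemma~\ref{lemma10}. The argument is a backward cascade through the three lines of the iteration~(\ref{4.2}): starting from the already-established convergence of $d(x_n,p)$ I force in turn $d(y_n,p)\to c$ and $d(z_n,p)\to c$, applying Lemma~\ref{lemma9} at each stage to squeeze the various selections together, and the assertion $d(x_n,T_ix_n)\to 0$ then drops out at the last line.

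First I would record the baseline estimates. Since each $T_i$ is nonexpansive with $T_ip=\{p\}$, the nonexpansiveness of $P_K$ together with Lemma~\ref{lemma7} and the $CAT(0)$ convexity inequality give $d(w_{n,i},p)\le d(x_n,p)$, $d(u_{n,i},p)\le d(y_n,p)$ and $d(v_{n,i},p)\le d(z_n,p)$, while the monotonicity computed in Lemma~\ref{lemma10} yields $d(x_{n+1},p)\le d(y_n,p)\le d(x_n,p)$ and $d(z_n,p)\le d(x_n,p)$. Hence $\limsup_n$ of each of $d(u_{n,i},p)$, $d(v_{n,i},p)$, $d(w_{n,i},p)$, $d(y_n,p)$, $d(z_n,p)$ is at most $c$. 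For the first cascade step, from $x_{n+1}=P_K(\bigoplus_i\lambda_{n,i}u_{n,i})$ the squeeze $c=\lim d(x_{n+1},p)\le\liminf d(\bigoplus_i\lambda_{n,i}u_{n,i},p)\le\limsup\sum_i\lambda_{n,i}d(u_{n,i},p)\le c$ pins $\lim d(\bigoplus_i\lambda_{n,i}u_{n,i},p)=c$, so Lemma~\ref{lemma9} gives $d(u_{n,i},u_{n,j})\to 0$ for all $i,j$. Substituting back into $d(x_{n+1},p)\le d(u_{n,m},p)+\sum_i\lambda_{n,i}d(u_{n,i},u_{n,m})$ yields $\liminf d(u_{n,m},p)\ge c$, and with the $\limsup\le c$ bound this gives $\lim d(u_{n,i},p)=c$ and hence $\lim d(y_n,p)=c$.

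I would then repeat this scheme on the middle and bottom lines. For $y_n$ I regard $\bigoplus_i\alpha_{n,i}w_{n,i}\oplus\bigoplus_i\beta_{n,i}v_{n,i}$ as one convex combination of the $2N$ points $\{w_{n,i},v_{n,i}\}$ with weights $\{\alpha_{n,i},\beta_{n,i}\}$, whose total is $1$, so that $\lim d(y_n,p)=c$ lets Lemma~\ref{lemma9} deliver $d(v_{n,i},v_{n,j})\to 0$, $d(v_{n,i},w_{n,j})\to 0$ and $d(w_{n,i},w_{n,j})\to 0$; the same $\liminf\ge c$ device then gives $\lim d(v_{n,i},p)=c$ and $\lim d(z_n,p)=c$. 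Applying the identical reasoning to $z_n=P_K(\gamma_{n,0}x_n\oplus\bigoplus_i\gamma_{n,i}w_{n,i})$, now a convex combination of $x_n$ and the $w_{n,i}$, forces $d(x_n,w_{n,i})\to 0$ through Lemma~\ref{lemma9}. Since $w_{n,i}\in T_ix_n$ we have $d(x_n,T_ix_n)\le d(x_n,w_{n,i})$, whence $\lim_{n\to\infty}d(x_n,T_ix_n)=0$, which in particular gives existence of the limit.

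The main obstacle is getting the $\liminf\ge c$ half of each squeeze right: the value $c$ has to be propagated \emph{backward} through the iteration, and this succeeds only because Lemma~\ref{lemma9} first collapses the distances between the various selections to zero, allowing $d(u_{n,m},p)$ (respectively $d(v_{n,m},p)$) to be swapped for the averaged quantity up to a vanishing error. The careful point is the $\limsup$/$\liminf$ bookkeeping across the three lines and, in the middle line, correctly viewing the mixed sum as a single $2N$-fold convex combination so that Lemma~\ref{lemma9} applies; the individual inequalities are all routine consequences of Lemma~\ref{lemma7} and the $CAT(0)$ convexity inequality.
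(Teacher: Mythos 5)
Your proposal is correct and follows essentially the same route as the paper: fix $p\in F$, use Lemma~\ref{lemma10} for the baseline $\limsup$ bounds, then cascade backward through the three lines of (\ref{4.2}), pinning each averaged distance to $c$ by a squeeze, invoking Lemma~\ref{lemma9} to collapse the selections, and recovering $\liminf\ge c$ to propagate $c$ to the next line, ending with $d(x_n,w_{n,i})\to 0$ and hence $d(x_n,T_ix_n)\to 0$. Your explicit remark that the middle line must be read as a single $2N$-fold convex combination with weights $\{\alpha_{n,i},\beta_{n,i}\}$ is exactly what the paper does implicitly (where it miscites Lemma~\ref{lemma11} instead of Lemma~\ref{lemma9}), so it is a clarification rather than a different argument.
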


\begin{theorem}
Let $\{T_{i}\}_{i=1}^{N}$ be multivalued nonexpansive mappings from $K$ to $%
KC(X)$ with $F=\bigcap\nolimits_{i=1}^{N}F(T_{i})\neq \emptyset ,$ $%
T_{i}p=\{p\}$ for all $p\in F$ . \ Then a sequence\ $\{x_{n}\}$ defined by (%
\ref{4.2}) $\Delta $-converges to $p\in F$ where $(p,p)$ is a common
solution of the problem (\ref{4.1}).
\end{theorem}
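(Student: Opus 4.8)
The plan is to run the same asymptotic-center-plus-demiclosedness argument used in Theorem \ref{teo4.5}, but now against the whole family $\{T_i\}_{i=1}^{N}$ simultaneously. The two facts I would lean on are already in hand: by Lemma \ref{lemma10} the sequence $\{x_n\}$ is bounded and $\lim_{n\to\infty} d(x_n,p)$ exists for every $p\in F$, and by Lemma \ref{lemma11} one has $\lim_{n\to\infty} d(x_n,T_i x_n)=0$ for each $i=1,\dots,N$. These play exactly the roles of the two hypotheses of Lemma \ref{(lemma4.3)}, except that the fixed-point inclusion must now be read against all $N$ maps at once.

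First I would establish the multivalued analogue of Lemma \ref{(lemma4.3)}, namely $\omega_w(x_n)\subseteq F$. Fix $u\in\omega_w(x_n)$, so some subsequence $\{u_n\}$ of $\{x_n\}$ has $A(\{u_n\})=\{u\}$. By Lemma \ref{(lemma2.10)} I extract a further subsequence $\{v_n\}$ with $\Delta\text{-}\lim_{n\to\infty} v_n = v\in K$. Since $\{v_n\}$ is a subsequence of $\{x_n\}$, the estimate $\lim_{n\to\infty} d(v_n,T_i v_n)=0$ survives for every $i$, so Theorem \ref{(teo4.3)} applied to each $T_i$ in turn yields $v\in T_i v$ for all $i$, i.e.\ $v\in F$. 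Because $v\in F$, Lemma \ref{lemma10} guarantees that $\{d(x_n,v)\}$ converges, and then Lemma \ref{(lemma2.11)} forces $u=v\in F$. Hence $\omega_w(x_n)\subseteq F$.

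Next I would show $\omega_w(x_n)$ is a singleton, again following Lemma \ref{(lemma4.3)}: with $A(\{x_n\})=\{x\}$ and $A(\{u_n\})=\{u\}$ for the subsequence above, the point $u$ now lies in $F$, so $\{d(x_n,u)\}$ converges and Lemma \ref{(lemma2.11)} gives $x=u$. Thus all subsequential asymptotic centers coincide, $\omega_w(x_n)=\{p\}$ for a single $p\in F$, which is precisely the assertion that $\{x_n\}$ is $\Delta$-convergent to $p\in F$.

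Finally, that $(p,p)$ solves (\ref{4.1}) is immediate: since $p\in F=\bigcap_{i=1}^{N} F(T_i)$ and $T_i p=\{p\}$, the only admissible choice is $u_i=p\in T_i p$, and the quasilinearization identity gives $\langle \overrightarrow{pp},\overrightarrow{py}\rangle = \tfrac{1}{2}[d^{2}(p,y)+d^{2}(p,p)-d^{2}(p,p)-d^{2}(p,y)]=0\ge 0$ for every $y\in K_i$, so the variational inequality holds for each $i$. The only genuine obstacle is the demiclosedness step, namely verifying that Theorem \ref{(teo4.3)} can be invoked once per index $i$ on the \emph{same} $\Delta$-convergent subsequence $\{v_n\}$; this is where the single-map result is stretched to a family, but since the $\Delta$-limit $v$ is common to all applications and the residuals $d(v_n,T_i v_n)$ vanish independently in $i$, the intersection $v\in\bigcap_{i=1}^{N} F(T_i)$ follows without further difficulty.
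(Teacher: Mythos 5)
Your proposal is correct and follows essentially the same route as the paper's proof: establish $\omega _{w}(x_{n})\subseteq F$ and then that $\omega _{w}(x_{n})$ is a singleton, using Lemma \ref{lemma10}, Lemma \ref{lemma11}, Lemma \ref{(lemma2.10)} and Lemma \ref{(lemma2.11)}. The only differences are that you discharge the demiclosedness step by citing Theorem \ref{(teo4.3)} once per map $T_{i}$ on the same $\Delta$-convergent subsequence (a legitimate and cleaner reuse, valid since the hypothesis $\lim_{n\rightarrow \infty }d(v_{n},T_{i}v_{n})=0$ passes to subsequences), whereas the paper re-proves that step inline by choosing nearest points $z_{n,i}\in T_{i}v$ and using compactness of $T_{i}v$; and you explicitly verify via quasilinearization the fact that $(p,p)$ solves (\ref{4.1}), which the paper leaves implicit.
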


\begin{proof}
It follows from Lemma \ref{lemma10} and Lemma \ref{lemma11} that \ $%
\lim_{n\rightarrow \infty }d(x_{n},T_{i}x_{n})=0$ for all $i\in \{1,2..N\}$, 
$\lim_{n\rightarrow \infty }d(x_{n},p)$ exists for all $p\in F$. Let $\omega
_{w}(x_{n}):=\cup A(\{u_{n}\})$ where union take on all subsequence $%
\{u_{n}\}$ of $\{x_{n}\}.$ To show that $\Delta $-convergence of $\{x_{n}\}$
it is enough to show that $\omega _{w}(x_{n})\subseteq F$ and $\omega
_{w}(x_{n})$ contains single point. First of all $\omega _{w}(x_{n})\subset
K $ by Lemma \ref{(lemma2.10)}. Let take$\ u\in \omega _{w}(x_{n})$, then
there exist subsequence $\{u_{n}\}$ of $\{x_{n}\}$ such that $\
A\{u_{n}\}=\{u\}.$By Lemma \ref{(lemma2.10)} and Lemma \ref{(lemma2.11)}
there exist a subsequence $(v_{n})$ of $\{u_{n}\}$ which $\Delta -$%
convergent to $v$. Let fix $i\in \{1,2...N\},$Since $T_{i}v$ is compact,
then for each $n\geq 1$ we can pick up $z_{n,i}\in T_{i}v$ satisfies $%
d(v_{n},z_{n,i})=d(v_{n},T_{i}v)$ and compactness of $T_{i}v$ implies there
exist a convergent subsequence $\{z_{n_{k},i}\}$ of $\{z_{n,i}\}$. Let $%
z_{n_{k},i}\rightarrow w_{i}\in T_{i}v$. Since $T_{i}$ is nonexpansive map
we have;%
\begin{eqnarray*}
d(v_{n_{k}},z_{n_{k},i}) &=&d(v_{n_{k}},T_{i}v)\leq
d(v_{n_{k}},T_{i}v_{n_{k}})+H(T_{i}v_{n_{k}},T_{i}v) \\
&\leq &d(v_{n_{k}},T_{i}v_{n_{k}})+d(v_{n_{k}},v)
\end{eqnarray*}%
Hence we have%
\begin{eqnarray*}
d(v_{n_{k}}w_{i}) &\leq &d(v_{n_{k}},z_{n_{k},i})+d(z_{n_{k},i},w_{i}) \\
&\leq &d(v_{n_{k}},T_{i}v_{n_{k}})+d(v_{n_{k}},v)+d(z_{n_{k},i},w_{i})
\end{eqnarray*}%
which implies%
\begin{equation*}
\limsup_{n\rightarrow \infty }d(v_{n_{k}},w_{i})\leq \limsup_{n\rightarrow
\infty }d(v_{n_{k}}v)
\end{equation*}%
Hence by uniqueness of asymptotic centers, we have $w_{i}=v\in T_{i}v.$
Since $i$ was arbitrary we have $v\in $ $F=\bigcap\limits_{i=1}^{N}F(T_{i})$
so $\lim\nolimits_{n\rightarrow \infty }d(x_{n},v)$ exist by Lemma \ref%
{lemma10} which implies $u=v\in F$ by Lemma \ref{(lemma2.11)}.Thus we have $%
\omega _{w}(x_{n})\subseteq F.$ If we take subsequence $\{u_{n}\}$ of $%
\{x_{n}\}$ with $A\{u_{n}\}=\{u\}$ and $A\{x_{n}\}=\{x\}$ then, since $\
u\in \omega _{w}(x_{n})\subseteq F$ and $\lim\nolimits_{n\rightarrow \infty
}d(x_{n},v)$ exist, we have $u=x$ by Lemma \ref{(lemma2.11)}.
\end{proof}

\begin{theorem}
If $K$ is also compact $\{T_{i}\}_{i=1}^{N}$ are multivalued nonexpansive
mappings from $K$ to $C(X)$ with $F=\bigcap\nolimits_{i=1}^{N}F(T_{i})\neq
\emptyset ,$ $T_{i}p=\{p\}$ for all $p\in F$ then the sequence\ $\{x_{n}\}$
defined by (\ref{4.2}) strongly strongly converges to $p\in F$ where $(p,p)$
is a common solution of the problem (\ref{4.1})
\end{theorem}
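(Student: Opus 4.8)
The plan is to combine the asymptotic regularity estimate of Lemma \ref{lemma11} with the compactness of $K$, in exact analogy with the single-operator strong-convergence theorem of Section 3. I would first record the two inputs: by Lemma \ref{lemma10} the sequence $\{x_{n}\}$ is bounded and $\lim_{n\rightarrow \infty }d(x_{n},p)$ exists for every $p\in F$, while by Lemma \ref{lemma11} one has $\lim_{n\rightarrow \infty }d(x_{n},T_{i}x_{n})=0$ for each $i\in \{1,\dots ,N\}$.

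Since $K$ is compact, I would extract a subsequence $\{x_{n_{k}}\}$ of $\{x_{n}\}$ with $x_{n_{k}}\rightarrow q$ for some $q\in K$. The core step is to prove $q\in F$, that is $q\in T_{i}q$ for every $i$. Fixing $i$ and using the elementary set-distance inequality $d(z,B)\leq d(z,A)+H(A,B)$ together with the nonexpansiveness of $T_{i}$, I would write
\[
d(q,T_{i}q)\leq d(q,x_{n_{k}})+d(x_{n_{k}},T_{i}x_{n_{k}})+H(T_{i}x_{n_{k}},T_{i}q)\leq 2d(q,x_{n_{k}})+d(x_{n_{k}},T_{i}x_{n_{k}}).
\]
Letting $k\rightarrow \infty $, the first term vanishes because $x_{n_{k}}\rightarrow q$ and the second because of Lemma \ref{lemma11}, so $d(q,T_{i}q)=0$; as $T_{i}q$ is closed (it lies in $C(X)$) this yields $q\in T_{i}q$. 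Since $i$ was arbitrary, $q\in F$.

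Finally, because $q\in F$, Lemma \ref{lemma10} guarantees that $\lim_{n\rightarrow \infty }d(x_{n},q)$ exists; the convergent subsequence $x_{n_{k}}\rightarrow q$ forces this limit to be $0$, hence $x_{n}\rightarrow q$ strongly. That $(q,q)$ is a common solution of (\ref{4.1}) is then immediate: since $q\in F(T_{i})$ and $T_{i}q=\{q\}$, the only admissible $u_{i}$ is $q$, and $\langle \overrightarrow{qq},\overrightarrow{qy}\rangle =0\geq 0$ for all $y\in K_{i}$ and all $i$.

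The only mild subtlety is the implication $d(q,T_{i}q)=0\Rightarrow q\in T_{i}q$, which relies on the values of $T_{i}$ being closed, and the correct bookkeeping in the set-distance triangle inequality; beyond this the argument is a direct transcription of the single-map compact case, so I expect no genuine obstacle.
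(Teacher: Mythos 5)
Your proof is correct and follows essentially the same route as the paper's: compactness of $K$ yields a convergent subsequence $x_{n_k}\to q$, the triangle inequality through $H(T_ix_{n_k},T_iq)$ together with the asymptotic regularity from Lemma \ref{lemma11} and nonexpansiveness gives $d(q,T_iq)=0$, hence $q\in F$, and Lemma \ref{lemma10} upgrades subsequential to full-sequence convergence. You are in fact slightly more complete than the paper, whose proof stops after establishing $q\in T_iq$ and leaves implicit both the passage from the subsequence to the whole sequence and the verification that $(q,q)$ solves (\ref{4.1}).
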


\begin{proof}
By Lemma \ref{lemma10} and Lemma \ref{lemma11},we have that\ $%
\lim_{n\rightarrow \infty }d(x_{n},T_{i}x_{n})=0$ for all $i\in \{1,2..N\}$, 
$\lim_{n\rightarrow \infty }d(x_{n},p)$ exists for all $p\in F$ Since $K$ is
compact there is a convergent subsequence $\{x_{n_{k}}\}$ of $\{x_{n}\},$
say $\lim_{i\rightarrow \infty }x_{_{n_{k}}}=q.$ Then for all $i\in
\{1,2..N\},$ we have

\begin{eqnarray*}
d(q,T_{i}q) &\leq
&d(q,x_{_{n_{k}}})+d(x_{_{n_{k}}},T_{i}x_{_{n_{k}}})+H(T_{i}x_{_{n_{k}}},T_{i}q)
\\
&\leq &d(q,x_{_{n_{k}}})+d(x_{_{n_{k}}},T_{i}x_{_{n_{k}}})+d(x_{_{n_{k}}},q)
\end{eqnarray*}%
and taking limit on $k,$implies that $\ q\in T_{i}q$ for all $i\in
\{1,2..N\} $. Hence $p\in F$
\end{proof}

\end{document}